\theoremstyle{definition}
\def\fnum{equation} 
\newtheorem{Thm}[\fnum]{Theorem}
\newtheorem{Que}[\fnum]{Question}
\newtheorem{Lem}[\fnum]{Lemma}
\newtheorem{Con}[\fnum]{Conjecture}
\newtheorem{Pro}[\fnum]{Proposition}
\numberwithin{equation}{section}
\newcommand{\nn}{{\bf{n}}}
\newcommand{\Hess}{{\text {Hess}}}
\def\RR{{\bold R}}
\def\SS{{\bold S}}
\newcommand{\e}{{\text {e}}}
\newcommand{\cC}{{\mathcal{C}}}
\newcommand{\cL}{{\mathcal{L}}}
\newcommand{\cS}{{\mathcal{S}}}
\newcommand{\eqr}[1]{(\ref{#1})}
\title[Analytical properties for degenerate equations]{Analytical properties for degenerate equations}
\author[]{Tobias Holck Colding}%
\address{MIT, Dept. of Math.\\
77 Massachusetts Avenue, Cambridge, MA 02139-4307.}
\author[]{William P. Minicozzi II}%
\thanks{The  authors
were partially supported by NSF Grants DMS 1404540 and DMS 1707270.}
\email{colding@math.mit.edu  and minicozz@math.mit.edu}
\begin{document}

\maketitle

\section{Introduction}

By a classical result, solutions of analytic elliptic PDEs, like the Laplace equation, are analytic.     In many instances, the properties that come from being analytic are more important than analyticity itself.
Many important equations are degenerate elliptic and solutions have much 
lower regularity.   Still,
 one may hope that solutions share  properties of analytic functions.  These properties are
  closely connected to important open problems.

    In this survey, we will explain why solutions of an important degenerate elliptic equation  have analytic properties even though the solutions are not even $C^3$.
This equation, known as the {\emph{arrival time equation}}, is
 \begin{align}	\label{e:arrivalu}
	-1 = |\nabla u|\,\text{div}\left( \frac{\nabla u}{|\nabla u|}\right)\,  .
\end{align}
Here $u$ is defined on a compact connected subset of $\RR^{n+1}$ with smooth mean convex boundary and $u$ is constant on the boundary.  Equation \eqr{e:arrivalu} is the prototype for a family of equations, see, e.g., \cite{OsSe},
 used for tracking moving interfaces in complex situations.  These equations have been instrumental in  applications, including semiconductor processing, fluid mechanics, medical imaging, computer graphics, and material sciences.

Even though   solutions  of \eqr{e:arrivalu} are   a priori only in the viscosity sense,  they are
 always twice differentiable by \cite{CM5}, though not necessarily $C^2$; see \cite{CM6}, \cite{H2}, \cite{I}, \cite{KS}. 
Even  when a solution is $C^2$,   it still might not be $C^3$, Sesum, \cite{S}, let alone analytic.  However, solutions have the following property conjectured for analytic functions:
   
  \begin{Thm}	\label{t:thom}
  \cite{CM9}   The Arnold-Thom conjecture holds for  $C^2$ solutions of \eqr{e:arrivalu}.  Namely, if $x(t)$ is a gradient flow line for $u$, then $x(t)$ has finite length and $\frac{x'(t)}{|x'(t)|}$ has a limit.
  \end{Thm}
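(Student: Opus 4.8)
The plan is to reduce Theorem~\ref{t:thom} to a \L ojasiewicz--type gradient inequality for $u$ near its critical set $\text{Crit}(u)=\{\nabla u=0\}$, and to extract that inequality from the known structure of singularities of the level set flow determined by $u$. A gradient flow line of $u$ is an integral curve of the line field $\nabla u/|\nabla u|$, so after reversing the time parameter if necessary we may assume $u$ is nondecreasing along $x(t)$ and that the end in question is one where $x(t)$ accumulates on $\text{Crit}(u)$ at some level $u_\infty\le\max u$; the alternative, accumulation on the boundary, is harmless since there $\nabla u\ne 0$ and the flow is smooth. Reparametrizing the curve by the value $s$ of $u$, its length equals $\int^{u_\infty}\!\frac{ds}{|\nabla u(x(s))|}$ and its unit tangent is $\nu=\nabla u/|\nabla u|$, the unit normal to $\{u=s\}$. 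Thus Theorem~\ref{t:thom} amounts to: (i) $\int^{u_\infty}\!\frac{ds}{|\nabla u|}<\infty$ along the curve, and (ii) $\nu$ has a limit as $s\uparrow u_\infty$; both are local questions near a putative accumulation point $x_\infty\in\text{Crit}(u)$.

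The key input is the local structure of the flow at $x_\infty$. Since $\{u=s\}$ evolves by mean curvature and the initial hypersurface is mean convex, every blowup of the flow at a singular point is a multiplicity-one shrinking generalized cylinder $\SS^k\times\RR^{n-k}$ (Huisken--Sinestrari, White), and by the Colding--Minicozzi uniqueness theorem for cylindrical tangent flows this blowup is unique: $x_\infty$ carries a well-defined cylinder dimension $k$ and, when $k<n$, a well-defined axis $L$ through $x_\infty$ (for $k=n$ one simply has round spheres collapsing to $x_\infty$, and the analysis below is easier). Hence near $x_\infty$ the level set $\{u=s\}$ is, after rescaling by $(u_\infty-s)^{1/2}$, close to the cylinder of radius $\sqrt{2k}$ about $L$; as $|\nabla u|=1/|H|$ and a cylinder of radius $r$ has $|H|=k/r$ with $r\approx(2k(u_\infty-s))^{1/2}$, this already forces $|\nabla u|\asymp(u_\infty-s)^{1/2}$ to leading order on the relevant part of $\{u=s\}$. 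What the argument actually needs is the quantitative version: constants $c>0,\mu>0$ so that $|\nabla u|\ge c\,(u_\infty-s)^{1/2}$ near $x_\infty$, while the deviation of $\{u=s\}$ from the model cylinder about $L$ decays at a rate whose product with $(u_\infty-s)^{-1/2}$ is integrable at $s=u_\infty$ (a negative power of $|\log(u_\infty-s)|$ would already suffice). This is the point at which solving \eqr{e:arrivalu} is used decisively: $u$ need not be analytic, but the Colding--Minicozzi \L ojasiewicz inequality for Huisken's Gaussian area --- the very inequality behind uniqueness of tangent flows --- converts, once one matches the parabolically rescaled picture in which it lives to the un-rescaled picture of $u$, into exactly such an estimate.

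Granting this, I would close with the \L ojasiewicz argument for gradient flows. Near $x_\infty$ the curve $x(s)$ is transverse to the level sets, which are graphs of small functions over the cylinder about $L$; its velocity splits into a radial component inside the $\SS^k$ factor, contributing length $\lesssim\int^{u_\infty}(u_\infty-s)^{-1/2}\,ds<\infty$, and an axial drift along $L$ together with a rotation of $\nu$, each bounded by the integral of the flattening rate from the previous step and hence not merely finite but summable over dyadic ranges of $u_\infty-s$. Therefore the curve has finite total length, so it converges to the single point $x_\infty$ (in particular its $\omega$-limit set is that point), and $\nu=x'/|x'|$ converges, necessarily to a unit vector orthogonal to $L$.

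The step I expect to be the main obstacle is the effective \L ojasiewicz inequality for $u$: establishing it at the most degenerate points of $\text{Crit}(u)$ and keeping honest control of constants while passing between scales. Mean convexity eliminates every non-cylindrical singularity model at no cost, so the genuine difficulties are the non-analytic gradient inequality itself and the need to rule out --- using the \emph{rate}, not merely decay --- the possibility that $x(s)$ winds around $L$ with nonconvergent tangent even though its axial displacement stays bounded.
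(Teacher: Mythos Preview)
Your outline correctly reproduces the paper's strategy for finite length: the gradient \L ojasiewicz inequality $|\nabla u|\asymp (u_\infty-u)^{1/2}$ follows from the cylindrical structure and gives $\int ds/|\nabla u|<\infty$, hence convergence to a point $x_\infty\in\cS$. You also correctly anticipate that the tangent convergence requires splitting $\nu=\gamma_s$ into a part along the axis $L$ (which tends to zero) and a part orthogonal to $L$.

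The genuine gap is your treatment of the orthogonal part. You assert that the ``rotation of $\nu$'' is bounded by the integral of the ``flattening rate'' (the deviation of $\{u=s\}$ from the cylinder about $L$), and that the \L ojasiewicz inequality of \cite{CM2} makes this rate integrable. It does not. In rescaled time $t=-\log(u_\infty-u)$ the quantity controlling $|\gamma_{ss}|$ is $|\nabla H|$ on the rescaled hypersurface $\Sigma_t$, and this in turn is controlled by the graph function $w$ over a nearby cylinder. The \cite{CM2} inequality gives $\sum_j\delta_j^{\bar\beta}<\infty$ for some $\bar\beta<1$ (where $\delta_j^2=F(\Sigma_{j-1})-F(\Sigma_{j+2})$), and one gets $\|w\|^2\lesssim\delta_j^\beta$; so $\|w\|^2$ is summable in $j$ but $\|w\|$ is \emph{not}. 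Consequently the naive bound on $|\nabla H|$, and hence on $|\gamma_{ss}|$, is not summable, and your argument as written cannot rule out the spiraling scenario you yourself flag in the last paragraph. The underlying reason is that $\cL+1$ on the cylinder has a nontrivial (``non-integrable'') kernel --- quadratic polynomials and infinitesimal rotations --- so the convergence of $\Sigma_t$ to $\cC$ is genuinely slow.

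What the paper adds, and what your proposal is missing, is a second decomposition $w=\tilde w+(w-\tilde w)$ with $\tilde w$ in that kernel. A frequency-function argument for the drift Laplacian (Theorem~\ref{t:goalCM7}) shows $|w-\tilde w|\lesssim\delta_j^{\nu}$ with $\nu$ close to $1$, which \emph{is} summable; and the explicit form \eqr{e:kernelcC} of $\tilde w$ shows that its contribution to $\nabla H$ vanishes after projecting orthogonally to the axis. Together these give $\sum_j\int_j^{j+1}\sup_{B_{2n}\cap\Sigma_t}|\Pi_{j+1}(\nabla H)|\,dt<\infty$, which is exactly the summability needed for the orthogonal part of $\gamma_s$ to converge. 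The \cite{CM2} \L ojasiewicz inequality alone does not deliver this; the kernel analysis is the new and essential idea.
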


The theorem applies, for instance, to solutions of \eqr{e:arrivalu} on  closed convex domains since these are $C^2$ by a 1990 result of Huisken, \cite{H1}  (though  not necessarily $C^3$, \cite{S}).

As we will see in Section \ref{s:s1},
the Arnold-Thom conjecture states that $\frac{x'(t)}{|x'(t)|}$ has a limit whenever $u$ is analytic and the gradient flow line itself has a limit.  Thus, Theorem \ref{t:thom} shows that,
in this way, $C^2$ solutions 
of \eqr{e:arrivalu} behave  like analytic functions are expected to.

  \subsection{The arrival time}

The geometric meaning of  \eqr{e:arrivalu} is that the level sets $u^{-1} ( t)$  are mean convex and evolve by mean curvature flow.  
One says that $u$ is the {\emph{arrival time}} since $u(x)$ is  the time the hypersurfaces $u^{-1}(t)$ arrive at $x$ under the mean curvature flow; see
   Chen-Giga-Goto, \cite{ChGG}, Evans-Spruck, \cite{ES}, Osher-Sethian, \cite{OsSe},  and \cite{CM3}.

Conjecturally,  the Arnold-Thom conjecture holds even for solutions that are not $C^2$, but merely twice differentiable:

\begin{Con}	\label{c:con}
\cite{CM9} Lojasiewicz's inequalities and the Arnold-Thom conjecture hold for all solutions of \eqr{e:arrivalu}.
\end{Con}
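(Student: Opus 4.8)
\medskip
\noindent\emph{A possible strategy.} The plan would be to localize the statement to arbitrarily small neighborhoods of the critical set of $u$ and, at each critical point, run a parabolic blow-up argument fed by an infinite-dimensional Lojasiewicz--Simon inequality for Huisken's Gaussian area functional. On the open set $\{\nabla u \ne 0\}$, equation \eqr{e:arrivalu}, after freezing $\nabla u/|\nabla u|$, is a uniformly elliptic quasilinear equation, so $u$ is smooth there (indeed analytic), and both Lojasiewicz inequalities and the Arnold--Thom property are classical. Fix then a critical point $p$; normalizing, $u(p)=T$ and $u\le T$ in a neighborhood of $p$, so that the level sets $\{u=t\}$, $t<T$, form a mean convex mean curvature flow becoming extinct at $p$ at time $T$.

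First I would perform the Huisken rescaling at $(p,T)$: with $s=-\log(T-t)$ set $\tilde\Sigma_s=(T-t)^{-1/2}\,(\{u=t\}-p)$. Huisken's monotonicity formula together with mean convexity (White; see also \cite{CM3}) gives that $\tilde\Sigma_s$ subconverges as $s\to\infty$ to a smooth self-shrinker $\Sigma_\infty$, which by the mean convex classification is a generalized cylinder $\SS^k\times\RR^{n-k}$ (the round sphere when $k=n$). The decisive input is then the Colding--Minicozzi Lojasiewicz--Simon inequality for $F(\Sigma)=(4\pi)^{-n/2}\int_\Sigma e^{-|x|^2/4}$ near such cylinders: it promotes subconvergence to genuine convergence (uniqueness of the blow-up $\Sigma_\infty$) and yields a definite decay rate for $\tilde\Sigma_s-\Sigma_\infty$ in a suitable weighted norm.

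Next I would transfer this rate back to $u$. Writing $\tilde\Sigma_s$ as a small $C^2$ graph over $\Sigma_\infty$ and undoing the rescaling controls $u(x)-T$ and $\nabla u(x)$ near $p$: to leading order $u(x)-T\approx-\tfrac{1}{2k}\langle\pi(x-p),x-p\rangle$, where $\pi$ is the orthogonal projection onto the $\SS^k$-factor (a negative semidefinite quadratic, degenerate along the axis when $k<n$), with the decay rate above quantifying the remainder. From this one extracts the Lojasiewicz gradient inequality $|\nabla u(x)|\ge c\,|u(x)-T|^{1-\theta}$ for some $\theta\in(0,\tfrac{1}{2}]$ near $p$, together with the companion inequality $\dist(x,\{u=T\})^{\mu}\le C\,|u(x)-T|$ for some $\mu>0$. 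Given the gradient inequality, the Arnold--Thom conclusion follows exactly as in Theorem \ref{t:thom}: along a gradient flow line one has $\frac{d}{dt}|u(x(t))-T|^{\theta}\le -c\,|x'(t)|$, so the line has finite length, and convergence of $x'/|x'|$ follows from the Kurdyka--Mostowski--Parusi\'nski refinement adapted to this setting.

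The hard part — precisely what separates Conjecture \ref{c:con} from Theorem \ref{t:thom} — is the treatment of critical points at which $u$ fails to be $C^2$, where $\Sigma_\infty$ is a genuine non-compact cylinder. There one must run the Lojasiewicz--Simon machinery in weighted spaces on a non-compact shrinker, where $F$ is not proper and the linearized operator is substantially more delicate; one must also invoke the full structure theory of mean convex flows to rule out more complicated extinction behavior and to handle clusters of critical points sitting at the same level $T$. The real obstacle is showing that the extrinsic convergence rate of $\tilde\Sigma_s$ forces a genuinely pointwise Lojasiewicz inequality for $u$ at such a degenerate $p$ — that is, that the blow-up of $\Hess u$ along the cylinder axis is mild enough — since it is exactly this degeneration that obstructs the $C^2$ regularity in the first place.
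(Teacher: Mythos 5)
This is a \emph{conjecture}, not a theorem: the paper states it as open (the text just before the statement reads ``Conjecturally, the Arnold-Thom conjecture holds even for solutions that are not $C^2$''), and \cite{CM9} is cited as the source of the conjecture, not of a proof. So there is no proof in the paper against which to compare your argument, and your proposal cannot be a correct proof because no such proof currently exists.

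That said, your outline is the natural research program and matches the framework the paper actually uses to prove the weaker $C^2$ statement of Theorem \ref{t:thom}: Huisken rescaling at the critical point, uniqueness of cylindrical blow-ups via the Lojasiewicz--Simon inequality of \cite{CM2}, then transferring the convergence rate back to $u$. You also correctly locate the obstruction — critical points where $u$ is merely twice differentiable but not $C^2$ — and you are right that your gradient inequality must allow $\theta$ throughout $(0,\tfrac{1}{2}]$, since the Angenent--Vel\'azquez examples in \cite{AV} force $p=1/(1-\theta)$ arbitrarily close to $1$. But the central step you describe as delivered by the Lojasiewicz--Simon inequality is exactly where the argument cannot currently be closed: \cite{CM2} gives uniqueness of the blow-up and a decay rate that is, in general, only a small inverse power of $\log$-time $s$, which after undoing the rescaling does \emph{not} automatically yield a pointwise Lojasiewicz gradient inequality for $u$ of the required polynomial form, and it certainly does not yet yield the finer tangent-line control needed for Arnold--Thom. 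In the $C^2$ case the paper gets a much stronger input (Theorem \ref{l:gL} with the sharp exponent $p=2$, plus the $C^1$ structure of $\cS$ from \cite{CM5}, \cite{CM6}), and even then the Arnold--Thom conclusion requires the delicate approximate-eigenfunction/frequency machinery of Sections 3--4. None of that machinery is presently available in the non-$C^2$ setting, where $\Hess u$ need not extend continuously to $\cS$. Your last paragraph honestly names this gap; the point to internalize is that the gap is not a detail to be filled in by known estimates — it is the open problem.
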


If so,
this would explain various conjectured phenomena.
 For example, 
  this would imply that the associated mean curvature flow is  singular at only finitely many times as has been conjectured,  \cite{W3}, \cite{AAG}, \cite{Wa}, \cite{M}.

\vskip1mm
  We believe that the principle that solutions of degenerate equations behave as though they are analytic, even when they are not, should be quite general.  For instance, there should be versions for other flows, including Ricci flow.

\subsection{Ideas in the proof}

To explain the ideas in the proof of Theorem \ref{t:thom}, suppose that the unit speed curve $\gamma(s)$  traces out a gradient flow line for $u$ that limits to a critical point $x_0$.
The simplest way to prove that the unit tangent $\gamma_s$ has a limit would be to prove that 
\begin{align}	\label{e:0p4}
	\int |\gamma_{ss}| \, ds < \infty \, .
\end{align}
   However, this is not necessarily true.  It turns out that $\gamma_{ss}$ is better behaved in some directions than in others, depending on   the geometry of the level sets of $u$.

By \cite{CM5}, the level sets of $u$ near $x_0$ are, in a scale-invariant way, converging to either spheres or cylinders.  This comes from a blow up analysis for the singularities of an associated mean curvature flow.
The spherical case is easy to handle and one can show that $|\gamma_{ss}|$ is integrable in this case.  However,  the cylindrical case is more subtle and $\gamma_{ss}$ behaves quite differently.  In particular,   the estimates in the direction of the axis of the cylinder are not strong enough to give \eqr{e:0p4}.  

There is a good reason that the estimates are not strong enough here: the presence of a
``non-integrable'' kernel for a linearized operator.  Here ``non-integrable'' means that there are infinitesimal variations that do not arise as the derivative of an actual one-parameter family of solutions.  As is well known, this corresponds to a slow rate of convergence to the limiting blow up.  Overcoming this requires
 a careful analysis of this kernel using the rate of growth (the frequency function) for the drift Laplacian.

\section{Gradient flows in finite dimensions}	\label{s:s1}

Given a function $f$, a gradient flow line $x(t)$ is a solution of the ODE 
\begin{align}
	x'(t) = \nabla f \circ x(t) 
\end{align}
with the initial condition $x(0) = \bar{x}$.
The chain rule gives that
\begin{align}	\label{e:chain}
	(f \circ x(t))' = |\nabla f|^2  \circ x(t) \, , 
\end{align}
so  we see that $f\circ x(t)$ is increasing unless $x(t) \equiv \bar{x}$  is a critical point of $f$.  

It is possible that $x(t)$ runs off to infinity (e.g., if $f(x,y) = x$ on $\RR^2$), but we are interested in the case where there is a limit point $x_{\infty}$.  That is, where there exist   $t_i \to \infty$ so that $x(t_i) \to x_{\infty}$.  It follows easily that   $\lim_{t \to \infty} f \circ x(t) = f(x_{\infty})$, $x_{\infty} $ is a critical point,  and $|x'|^2$ is integrable.
This raises the obvious question:
\begin{Que}
Does $x(t)$ converge to $x_{\infty}$?  
\end{Que}
Perhaps surprisingly, there are examples where $x(t)$ does not converge; see, e.g.,  fig.~$3.5$ in \cite{Si} or fig.~$1$ in \cite{CM8}.  However, 
if $f$ is real analytic, Lojasiewicz, \cite{L1}, proved that $x(t)$ has finite length and, thus,  converges.    This is  known as {\emph{Lojasiewicz's theorem}}.
The proof relied on two   {\emph{Lojasiewicz  inequalities}} for analytic functions.

 \subsection{Lojasiewicz inequalities}

    In real algebraic geometry, the Lojasiewicz inequality,  \cite{L3},   bounds the distance from a point to the nearest zero of a given real analytic function. Namely,  if $Z \ne \emptyset$ is  the zero set   of $f$ and $K$ is a compact set, then
    there exist   $\alpha\geq 2$ and a positive constant $C$ such that for $x \in K$
  \begin{align}	\label{e:Lj1}
  \inf_{z\in Z} |x-z|^{\alpha}\leq C\, |f(x) | \, .
  \end{align}
The exponent $\alpha$ can be arbitrarily large, depending on the function $f$.

 Equation \eqr{e:Lj1} was the main ingredient in Lojasiewicz's proof of Laurent Schwarz's division conjecture\footnote{L. Schwartz conjectured that if $f$ is a non-trivial real analytic function and $T$ is a distribution, then there exists a distribution $S$ 
satisfying $f\, S$ = $T$.} in analysis.  
Around the same time, 
H\"ormander, \cite{Ho}, independently proved Schwarz's division conjecture in the special case of polynomials and a  key step in his proof was also 
 \eqr{e:Lj1}  when $f$ is a polynomial.

  Lojasiewicz solved a conjecture of Whitney\footnote{Whitney conjectured that if $f$ is analytic  in an 
open set $U$ of $\RR^n$, then
 the zero set $Z$ is a deformation retract of an open neighborhood of $Z$ in 
$U$.} in \cite{L4} using
a second inequality -- known as the gradient inequality:  Given  a critical point $z$, there is  neighborhood $W$ of $z$ and constants $p>1$ and $C> 0$ such that for all $x\in W$
 \begin{align}	\label{e:lou}
  |f(x)-f(z)| \leq C\, |\nabla_x f |^p \, .
  \end{align}
  An immediate consequence of \eqr{e:lou} is that $f$ takes the same value at every critical point in $W$.    
   It is easy to construct smooth functions where this is not the case.
   
   This gradient inequality \eqr{e:lou} was the key ingredient in the proof of the Lojasiewicz theorem.  The idea is that \eqr{e:lou} and \eqr{e:chain} give a differential inequality for $f$ along the gradient flow line that leads to a rate of convergence; see, e.g., \cite{L1}, \cite{CM2}, \cite{CM8} and \cite{Si}.

\subsection{Arnold-Thom conjectures}
 
      Around 1972,   Thom, \cite{T}, \cite{L2}, \cite{Ku}, \cite{A}, \cite{G}, conjectured a strengthening of Lojasiewicz's theorem, asserting that  each gradient flow line $x(t)$ of an analytic function $f$ approaches its limit from a unique limiting direction:
 
 \begin{Con}	\label{c:conj0}
 If   $x(t)$   has a limit point, then 
  the limit of secants $\lim_{t \to \infty} \, \frac{x(t) - x_{\infty} } { |  x(t) - x_{\infty}|}$ exists. 
 \end{Con}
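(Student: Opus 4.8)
The plan is to prove Conjecture \ref{c:conj0} as a consequence of the two Lojasiewicz inequalities, upgrading the convergence statement of Lojasiewicz's theorem to a statement about the limiting secant direction. Without loss of generality I would assume $x_\infty = 0$ and $f(x_\infty) = 0$, and I would work on a neighborhood $W$ of $x_\infty$ on which the gradient inequality \eqr{e:lou} holds; since $x(t) \to x_\infty$, the flow line eventually stays in $W$, so I may restrict attention to $t$ large. The first step is to record the length estimate that comes out of the proof of Lojasiewicz's theorem: combining \eqr{e:lou} with the chain rule \eqr{e:chain}, one shows that $-\frac{d}{dt}\left( f\circ x(t)\right)^{1-\frac1p}$ is bounded below by a positive multiple of $|x'(t)|$, so that $\int_t^\infty |x'(s)|\,ds \leq C\, \left(f\circ x(t)\right)^{1-\frac1p}$ for a constant $C$ depending only on $W$. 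In particular the tail length $L(t) := \int_t^\infty |x'(s)|\,ds$ is finite and tends to $0$ as $t\to\infty$.

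The key step is to compare the distance $|x(t)|$ to the tail length $L(t)$. On one hand, since $x(s) \to 0$, the triangle inequality along the curve gives $|x(t)| \leq \int_t^\infty |x'(s)|\,ds = L(t)$. On the other hand, the Lojasiewicz inequality \eqr{e:Lj1} applied with the zero set of $f$ (or, more precisely, the gradient inequality again) bounds $f\circ x(t)$ from below in terms of $|x(t)|$ when $x(t)$ is close to the critical point — concretely, there are constants $c>0$ and $\beta$ with $f\circ x(t) \leq c\, |x(t)|^{\beta}$, which plugged into the tail-length estimate yields $L(t) \leq C' |x(t)|^{\theta}$ for some exponent $\theta > 0$. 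Hence $|x(t)|$ and $L(t)$ are comparable up to a power, and in particular $L(t)/|x(t)| \to 0$ is \emph{not} what we get for free; rather, what matters is that the ratio is controlled. The conclusion about secants then follows from the general principle: if a rectifiable curve $x(t)\to 0$ has the property that its tail length $L(t)$ is bounded by a fixed multiple of $|x(t)|$ for all large $t$, it need not have a limiting direction, so one genuinely needs the Lojasiewicz decay — one argues that $\frac{d}{dt}\frac{x(t)}{|x(t)|}$ is integrable. Writing $\frac{d}{dt}\frac{x}{|x|} = \frac{1}{|x|}\left( x' - \langle x', \frac{x}{|x|}\rangle \frac{x}{|x|}\right)$, its norm is at most $\frac{|x'(t)|}{|x(t)|}$, so integrability of the secant direction reduces to showing $\int^\infty \frac{|x'(t)|}{|x(t)|}\,dt < \infty$, and this is where the quantitative lower bound $|x(t)| \gtrsim L(t)^{1/\theta}$ combined with $|x'(t)| = -L'(t)$ does the job after an integration by parts or a dyadic decomposition of the time axis.

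I expect the main obstacle to be exactly this last integrability estimate $\int^\infty \frac{|x'(t)|}{|x(t)|}\,dt < \infty$: the crude bound $|x(t)| \leq L(t)$ goes the wrong way, so one cannot simply divide. The fix is to feed the Lojasiewicz exponent back in: from $L(t) \le C'\,|x(t)|^\theta$ we get $|x(t)| \ge (L(t)/C')^{1/\theta}$, and since $|x'(t)| = -L'(t)$ we are reduced to showing $\int^\infty \frac{-L'(t)}{L(t)^{1/\theta}}\,dt < \infty$, i.e. that $L(t)^{1-1/\theta}$ is integrable in the appropriate sense — which holds precisely when $\theta > 1$, and this inequality $\theta > 1$ is what the two Lojasiewicz inequalities together guarantee (it is the same mechanism that in the $C^2$ arrival-time setting of Theorem \ref{t:thom} gets replaced by the frequency-function analysis of the drift Laplacian). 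A secondary technical point is that $|x(t)|$ could a priori vanish at isolated times before the limit, but $\frac{d}{dt}(f\circ x) = |\nabla f|^2 \circ x(t) > 0$ away from the critical point forces $x(t)\neq 0$ for all finite $t$ once we are past any critical point, so this does not actually occur along a nonconstant flow line.
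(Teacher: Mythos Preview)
The paper does not give its own proof of this statement: Conjecture~\ref{c:conj0} is Thom's gradient conjecture for analytic functions, which the paper merely records and attributes to Kurdyka--Mostowski--Parusi\'nski \cite{KMP}. So there is no ``paper's proof'' to compare against; the question is whether your sketch actually proves the conjecture.

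It does not. The decisive gap is the claim that the two Lojasiewicz inequalities force $\theta>1$. Unwinding your bounds: the gradient inequality \eqr{e:lou} yields $L(t)\le C\,|f\circ x(t)|^{\,1-1/p}$, and the upper bound $|f(x)|\le C\,|x-x_\infty|^{2}$ you invoke is just second--order Taylor expansion at a critical point (the distance inequality \eqr{e:Lj1} points the other way). Combining these gives $L(t)\le C'\,|x(t)|^{\theta}$ with $\theta=2(1-1/p)$. The gradient inequality only guarantees $p>1$, so $\theta$ ranges over $(0,2)$; whenever $p\le 2$ one has $\theta\le 1$ and your integral $\int (-L')/L^{1/\theta}\,dt$ diverges. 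Concretely, for $f(x)=-|x|^{2m}$ one computes $p=\tfrac{2m}{2m-1}$ and $\theta=\tfrac{1}{m}$, so $\theta$ can be arbitrarily small. In one variable the conjecture is of course trivial, but the computation shows your mechanism --- bounding $|x(t)|$ below by a power of $L(t)$ strong enough to make $|x'|/|x|$ integrable --- cannot be extracted from \eqr{e:Lj1} and \eqr{e:lou} alone. This is not a technicality: Thom's conjecture stood for nearly three decades after the Lojasiewicz inequalities were available, and the \cite{KMP} proof requires substantially finer input (characteristic exponents, control of the radial versus angular speed via an analysis on the blow--up) than anything in your outline. Your final paragraph implicitly concedes the difficulty by pointing to the frequency analysis in the arrival--time setting, but in the general analytic case no substitute argument is offered.
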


    \begin{figure*}[htbp]
\centering\includegraphics[totalheight=.18\textheight, width=.50\textwidth]{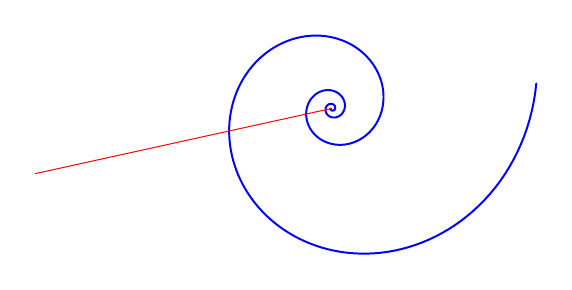}
\caption*{{\bf{Figure}} illustrates in $\RR^3$ a situation   conjectured to be impossible.      The Arnold-Thom conjecture asserts that a blue 
integral curve  does  not spiral  as it approaches the   critical set (illustrated in red,  orthogonal to the plane where the curve spirals). }   
  \end{figure*}

    This 
 conjecture  arose 
 in  Thom's work on catastrophe theory and singularity theory  and became known as {\emph{Thom's gradient conjecture}}.  The conjecture  was finally proven in 2000 by Kurdyka, Mostowski, and Parusinski in \cite{KMP},
  but the following stronger conjecture remains open  (see page $282$ in Arnold's problem list,   \cite{A}):

\begin{Con}  	\label{c:conj}
  If  $x(t)$  has a limit point, then the limit
  of the unit tangents $\frac{x'(t)}{|x'(t)|}$ exists. 
 \end{Con}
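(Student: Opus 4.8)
\emph{Proof strategy.} The plan is to start from the two deepest facts already available for analytic $f$ --- the Lojasiewicz gradient inequality \eqr{e:lou} and the resolution of Thom's gradient conjecture (Conjecture \ref{c:conj0}) by Kurdyka, Mostowski and Parusinski \cite{KMP} --- and to promote ``convergence of the secants'' to ``convergence of the unit tangents.'' Normalize so that $x_\infty = 0$ and $f(0) = 0$; then $f \circ x$ increases to $0$. Reparametrize the flow line by arc length as a unit speed curve $\gamma : [0,L) \to \RR^n$, so that
\begin{align}
	\gamma_s = N(\gamma) \, , \qquad N := \frac{\nabla f}{|\nabla f|} \, .
\end{align}
Lojasiewicz's theorem gives $L < \infty$ and $\gamma(s) \to 0$, and \cite{KMP} gives that the secant $\gamma(s)/|\gamma(s)|$ has a limit $v \in \SS^{n-1}$. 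What has to be shown is that $\gamma_s$ has a limit as $s \to L$; equivalently, that $\gamma_s$ satisfies a Cauchy criterion, for which the crude sufficient condition is $\int_0^L |\gamma_{ss}|\,ds < \infty$ as in \eqr{e:0p4}. If $\gamma_s$ did \emph{not} converge it would have to oscillate near $s = L$; since \cite{KMP} already forces the secant to converge, such oscillations would have to average out, so the task is precisely to upgrade an averaged statement to a pointwise one --- which is possible exactly when the oscillation is controlled.

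Next I would run a blow-up / tangent-function analysis at $0$, in the spirit of \cite{CM5}. The gradient inequality \eqr{e:lou} fixes the decay rate of $f \circ \gamma$ and of $|\gamma(s)|$, and the rescalings of $f$ about $0$ subconverge to a ``tangent function'' $f_0$; using an Almgren-type frequency monotonicity for $f$ at $0$ (the rate of growth of $f$ measured against spheres), $f_0$ can be taken homogeneous. For such a homogeneous model the gradient flow is, in polar coordinates, a product of radial decay with a gradient flow on $\SS^{n-1}$, and in the favorable (``integrable'') case --- e.g. when the relevant critical point is Morse--Bott in the angular directions --- one checks directly that $\gamma_{ss}$ is $L^1$, so the unit tangent converges. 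The plan is then: write $f = f_0 + E$ with a quantitative Lojasiewicz-type bound on $E$ and its derivatives, split $\gamma_{ss}$ into a model part (governed by $f_0$) and an error part, bound the error part via that estimate together with \eqr{e:chain}, and observe that the purely radial component of $\gamma_{ss}$ --- essentially $\frac{d}{ds}\langle N(\gamma), \gamma/|\gamma|\rangle$ --- is always controllable this way. The remaining enemy is the angular component coupled to a slowly converging model.

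The main obstacle, and the reason Conjecture \ref{c:conj} is not yet known in general, is a ``non-integrable kernel'' for the operator obtained by linearizing the gradient flow at $f_0$: there can be infinitesimal deformations of $f_0$ that do not arise from genuine one-parameter families of solutions, and this forces the convergence $f \to f_0$ (equivalently, the convergence of the angular part of the flow) to be merely logarithmically slow, of order a negative power of $\log(L-s)$. In that regime $\int_0^L |\gamma_{ss}|\,ds$ diverges and \eqr{e:0p4} simply fails, so one cannot conclude by absolute integrability; instead one must show that the dangerous part of $\gamma_{ss}$ --- which lives in the kernel directions --- produces at worst a \emph{controlled, non-spiraling} rotation of $\gamma_s$, i.e. the angle between $\gamma_s$ and $v$ obeys a Lojasiewicz-type differential inequality even though $|\gamma_{ss}|$ is not integrable. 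Proving such an inequality is the crux: it requires combining a frequency-function analysis of this kernel (as is carried out in the arrival-time setting using the drift Laplacian, \cite{CM9}) with a quantitative version --- now at the level of the gradient direction, not just the secant --- of the subanalytic ``talweg''/valley-line machinery of \cite{KMP}. I expect this to be the main difficulty; it is also exactly the point at which the low-dimensional case $n \le 2$, or cases where the kernel is integrable (polynomial rate of convergence to $f_0$), go through, while the general slowly-converging case remains the open heart of the problem.
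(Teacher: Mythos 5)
The statement you are trying to prove is Conjecture \ref{c:conj}, which the paper explicitly states \emph{remains open}; the paper offers no proof of it for general analytic $f$, and what it actually proves (Theorem \ref{t:thom}) is the analogous statement for $C^2$ solutions of the arrival time equation \eqr{e:arrivalu}, where the special structure of the associated rescaled mean curvature flow is essential. So there is no ``paper's proof'' to match yours against, and your proposal is not a proof either: you concede in your final sentence that ``the general slowly-converging case remains the open heart of the problem,'' which is precisely the content of the conjecture. A strategy that identifies the crux and then declares it unresolved leaves the statement unproven.

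Beyond that, several intermediate steps of your outline are themselves unjustified for a general analytic $f$. There is no Almgren-type frequency monotonicity for an arbitrary analytic function at a critical point that forces blow-ups to be homogeneous; the frequency machinery invoked in \cite{CM7} and \cite{CM9} applies to (approximate) eigenfunctions of the drift Laplacian arising from the rescaled MCF, and the paper's whole point is that the degenerate equation \eqr{e:arrivalu} supplies this extra structure that a generic analytic function lacks. Likewise, the claim that in the ``integrable''/Morse--Bott case one ``checks directly'' that $\int |\gamma_{ss}|\,ds < \infty$ is plausible but not carried out, and the decomposition $f = f_0 + E$ with quantitative control on $E$ presupposes a rate of convergence to the tangent function that is exactly what fails in the non-integrable case. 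The honest summary is that you have reproduced, at the level of heuristics, the obstruction the paper describes (a non-integrable kernel forcing slow convergence, so that \eqr{e:0p4} fails and one must instead control the kernel-direction rotation of $\gamma_s$), but you have not supplied the argument that overcomes it, and no such argument is currently known outside the arrival-time setting.
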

 
 \vskip2mm
 \noindent
 It is easy to see that  the {\emph{Arnold-Thom conjecture}}
 \ref{c:conj} implies Thom's  conjecture \ref{c:conj0}.

\section{Lojasiewicz theorem for the arrival time}	\label{s:s2}

The arrival time $u$ is a solution of the degenerate elliptic equation  \eqr{e:arrivalu} and, in particular, it is not  smooth in general, let alone real analytic. 
However, if $u$ is $C^2$, then it satisfies the following gradient Lojasiewicz inequality:

\begin{Thm}	\label{l:gL}
\cite{CM9} 
If $u$ is a $C^2$ solution of  \eqr{e:arrivalu} and $\sup u =0$, then $0$ is the only critical value
 and 
\begin{align}	\label{e:uniqun}
 	\frac{|\nabla u|^2}{-u} \to \frac{2}{n-k} {\text{ as }} u \to 0  \, .
 \end{align}
 In particular, there exists $C>0$ so that $C^{-1} \, |\nabla u|^2 \leq -u \leq C \, |\nabla u|^2$.
 \end{Thm}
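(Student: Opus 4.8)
The strategy is to read off the asymptotics of $|\nabla u|^2/(-u)$ from the blow-up analysis of \cite{CM5}, after first reducing to the behavior near the critical set. Recall that the level sets $\Sigma_t=u^{-1}(t)$ evolve by mean curvature flow, that $|\nabla u|$ is the reciprocal of the normal speed, hence of the mean curvature $|H|$ of $\Sigma_t$, and that the critical set of $u$ is exactly the singular set of this flow; thus $\sup u=0$ says the flow becomes extinct at time $0$. The algebraic point I would exploit is that, for a fixed space-time point $(x_0,t_0)$, the quantity $|\nabla u|^2/(t_0-u)$ is invariant under the parabolic rescalings $u\mapsto\lambda^2\bigl(u(x_0+\lambda^{-1}\cdot)-t_0\bigr)$ (which again solve \eqr{e:arrivalu}): $|\nabla u|^2$ picks up a factor $\lambda^2$ and $t_0-u$ a factor $\lambda^{-2}$. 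On the model shrinkers this ratio is explicit: the arrival time of the shrinking cylinder $S^{n-k}(\sqrt{2(n-k)})\times\RR^k$ is
\begin{equation*}
u_\infty(y)=-\frac{|y'|^2}{2(n-k)},\qquad y=(y',y'')\in\RR^{n-k+1}\times\RR^{k},
\end{equation*}
for which $|\nabla u_\infty|^2=|y'|^2/(n-k)^2$ and $-u_\infty=|y'|^2/(2(n-k))$, so $|\nabla u_\infty|^2/(-u_\infty)\equiv 2/(n-k)$ (the round-point case being $k=0$).

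Next I would show that $0$ is the only critical value. Since the flow is smooth near its initial time ($\partial\Omega$ is a smooth mean convex hypersurface, so $|\nabla u|\ne 0$ there), every critical point $x_0$ is interior; let $t_0=u(x_0)$, and suppose for contradiction $t_0<0$, so $t_0$ is an interior singular time. By \cite{CM5} the level sets near $x_0$ converge, after rescaling, to a multiplicity-one round sphere or cylinder. In the spherical case, $\Sigma_t$ is a connected hypersurface for $t$ just below the first singular time, so the whole flow would contract to a round point at $t_0$ and be extinct there, forcing $\sup u=t_0<0$, a contradiction. In the cylindrical case I would invoke that a $C^2$ arrival time cannot have an interior singular time modeled on a shrinking cylinder --- the local expansion of $u$ at such a point, controlled by the methods of \cite{CM5}, \cite{CM6}, contains a term that is not twice continuously differentiable --- again a contradiction. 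Hence $t_0=0$; in particular $u^{-1}(t)$ is a smooth closed hypersurface and $|\nabla u|>0$ for every $t\in(\min u,0)$. This step is the heart of the matter and the place I expect to be the main obstacle: it is exactly where the hypothesis $u\in C^2$ is used essentially, and it relies on the structure theory of mean-convex mean curvature flow rather than on any soft argument.

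With this in hand, fix a critical point $x_0$; by the previous step $u(x_0)=0$. By \cite{CM5} the rescaled arrival times $\tilde u_\lambda(y)=\lambda^2\,u(x_0+\lambda^{-1}y)$ converge, as $\lambda\to\infty$, locally uniformly with their gradients to the arrival time $u_\infty$ of the limiting shrinker $S^{n-k}(\sqrt{2(n-k)})\times\RR^k$. Given $x$ near $x_0$ with $u(x)<0$, set $\lambda=(-u(x))^{-1/2}$ and $y=\lambda(x-x_0)$, so that $\tilde u_\lambda(y)=-1$ and $y$ lies on a level set converging to $\{u_\infty=-1\}=S^{n-k}(\sqrt{2(n-k)})\times\RR^k$, which modulo translations along the axis is compact. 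Using the scale invariance from the first paragraph together with the (translation-uniform) convergence of $\tilde u_\lambda$,
\begin{equation*}
\frac{|\nabla u(x)|^2}{-u(x)}=\frac{|\nabla\tilde u_\lambda(y)|^2}{-\tilde u_\lambda(y)}=|\nabla\tilde u_\lambda(y)|^2\longrightarrow|\nabla u_\infty|^2\big|_{\{u_\infty=-1\}}=\frac{2}{n-k}\qquad\text{as }x\to x_0 .
\end{equation*}
Since the critical set $\{u=0\}$ is compact, this gives $|\nabla u|^2/(-u)\to 2/(n-k)$ as $u\to 0$, with $n-k\in\{1,\dots,n\}$ read off from the limiting shrinker.

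Finally, the two-sided bound is a compactness statement. By the previous paragraph there is a neighborhood of $\{u=0\}$ on which $|\nabla u|^2/(-u)$ lies between the positive constants $1/n$ and $2$; on the complement, $-u$ is bounded below by a positive constant while $|\nabla u|$ is a positive continuous function on a compact set, so the same ratio is bounded above and below there as well. Combining gives $C^{-1}|\nabla u|^2\le -u\le C|\nabla u|^2$ on $\Omega$. The only technical point beyond the obstacle already flagged is the uniformity needed in the third paragraph: one must know the rescaled level sets converge to the cylinder uniformly under translations along its axis, which is part of the estimates in \cite{CM5}.
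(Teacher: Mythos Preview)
The paper is a survey and does not include a self-contained proof; it cites \cite{CM9} and, in the subsection immediately following, records the structural facts ($\cS 1$) and ($\cS 2$) from \cite{CM5}, \cite{CM6}. Those two facts are all that is really needed, and your argument is close in spirit but takes a detour at the one genuinely hard step.

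Your scaling argument for $|\nabla u|^2/(-u)\to 2/(n-k)$ is correct and amounts to second-order Taylor expansion at a point of $\cS$, once ($\cS 2$) gives $\Hess_u(q)=-\frac{1}{n-k}\,\Pi$. Taking $q\in\cS$ nearest to $x$ (so $x-q\perp T_q\cS=\ker\Pi$, hence $\Pi(x-q)=x-q$), one has
\[
u(x)=-\tfrac{1}{2(n-k)}\,|x-q|^2+o(|x-q|^2),\qquad |\nabla u(x)|^2=\tfrac{1}{(n-k)^2}\,|x-q|^2+o(|x-q|^2),
\]
and the ratio tends to $2/(n-k)$. The uniformity you flag at the end is then just uniform continuity of $\Hess_u$ near the compact set $\cS$, which comes for free from $u\in C^2$; no separate control on axis-translates of rescaled flows is required. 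This also makes clear that the integer $k$ is the same at every critical point, which your formulation uses implicitly.

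The gap is in your proof that $0$ is the only critical value. The spherical case is fine, but in the cylindrical case you assert that an interior-time cylindrical singularity forces a non-$C^2$ term in the local expansion of $u$. That is not what \cite{CM5}, \cite{CM6} establish, and cylindrical blow-ups \emph{do} occur for $C^2$ solutions (at the extinction time, whenever $k>0$), so the mechanism you describe cannot be the right one. What actually makes this step short is the \emph{connectedness} of $\cS$ in ($\cS 1$): since $\nabla u\equiv 0$ on the connected $C^1$ submanifold $\cS$, $u$ is constant there, and since the interior maximum of $u$ is attained on $\cS$, that constant is $\sup u=0$. You were right that this is where the $C^2$ hypothesis is used essentially; it enters through the structure theorem for $\cS$ (which in turn costs real work in \cite{CM5}, \cite{CM6}) rather than through a direct regularity obstruction at a hypothetical interior singular time.
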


 In particular,  \eqr{e:lou} holds with $p=2$ for a   $C^2$ solution $u$ of  \eqr{e:arrivalu}.  Given any  $p>1$, there are solutions of \eqr{e:arrivalu} where \eqr{e:lou}
fails for $p$; obviously, these  are not $C^2$.  Namely, for any odd   $m\geq 3$,     Angenent and Vel\'azquez, \cite{AV}, construct rotationally symmetric examples  with  
\begin{align}
	|u-u(y)| \approx |\nabla u|^{ \frac{m}{m-1}}
\end{align}
 for a sequence of points tending to $y$.  The examples in \cite{AV} were constructed to analyze so-called type II singularities that were previously observed by Hamilton and proven rigorously to exist by Altschuler-Angenent-Giga,   \cite{AAG}; cf. also \cite{GK}.
  
\subsection{The flows lines approach the critical set orthogonally}
 Let $u$ be a $C^2$ solution to \eqr{e:arrivalu} with $\sup u = 0$ and $\cS$   its critical set
\begin{align}
	\cS = \{ x \, | \, \nabla u(x) = 0 \} \, .
\end{align}
 The mean curvature flow given by the level sets of $u$  is smooth away from   $\cS$ and each point in $\cS$ has a cylindrical singularity; see, \cite{W1}, \cite{W2},   \cite{H1}, \cite{HS1}, \cite{HS2}, \cite{HaK}, \cite{An}; cf. \cite{B}, \cite{CM1}.   
Moreover, \cite{CM5} and \cite{CM6} give:
\begin{enumerate}
\item[($\cS 1$)]   $\cS$ is a closed embedded connected $k$-dimensional $C^1$ submanifold whose tangent space is   the kernel of $\Hess_u$.   Moreover, $\cS$ lies in the interior of the region where $u$ is defined.
\item[($\cS 2$)] 
 If $q \in \cS$, then
	$\Hess_u (q) = - \frac{1}{n-k} \, \Pi$ and $\Delta u (q) = - \frac{n+1-k}{n-k}$, 
 where $\Pi$ is orthogonal projection onto the orthogonal complement of the kernel.
 \end{enumerate}

\vskip2mm
The next theorem shows that the gradient flow lines of $u$  have finite length (this is the Lojasiewicz theorem for $u$), converge to points in $\cS$, and approach $\cS$ orthogonally.   The first claims follow immediately from the gradient Lojasiewicz inequality of Theorem \ref{l:gL}.
Let $\Pi_{\text{axis}}$ denote orthogonal projection onto the kernel of $\Hess_u$.

\begin{Thm}	\label{l:flowlines}
\cite{CM9}  Each flow line $\gamma$  for $\nabla u$ has finite length and limits to a point in $\cS$.
 Moreover, if we parametrize $\gamma$ by   $s\geq 0$ with $|\gamma_s| =1$ and $\gamma (0) \in \cS$, then
\begin{align}
	u ( \gamma (s)) & \approx   \frac{-s^2}{2(n-k)}   \, ,  \label{e:flow1}  \\
	\left|\nabla u   ( \gamma (s))  \right|^2 & \approx \frac{s^2}{(n-k)^2} \, ,   \label{e:flow2}  \\
		\Pi_{\text{axis}} (\gamma_s) &\to 0 \, .   \label{e:flow3}
\end{align}
In particular, for $s$ small, we have  that $\gamma (s) \subset B_{ 2n \, \sqrt{ -u(\gamma(s))}}  (\gamma(0))$.
\end{Thm}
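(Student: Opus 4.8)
The plan is to establish the three asymptotics \eqref{e:flow1}, \eqref{e:flow2}, \eqref{e:flow3} and then deduce the final containment $\gamma(s)\subset B_{2n\sqrt{-u(\gamma(s))}}(\gamma(0))$ as a quantitative bookkeeping consequence. The first step is to feed the gradient Lojasiewicz inequality of Theorem \ref{l:gL} into the gradient flow ODE. Along a unit-speed reparametrization $\gamma(s)$ of a flow line for $\nabla u$ we have $\frac{ds}{dt}=|\nabla u|$ and, by the chain rule \eqref{e:chain}, $\frac{d}{ds}\,u(\gamma(s))=|\nabla u|(\gamma(s))$. Combined with the sharp limit $\frac{|\nabla u|^2}{-u}\to\frac{2}{n-k}$ from \eqref{e:uniqun}, this gives the scalar ODE $\frac{d}{ds}\bigl(-u(\gamma(s))\bigr)=-|\nabla u|\approx -\sqrt{\tfrac{2}{n-k}}\,\sqrt{-u}$, i.e. $\frac{d}{ds}\sqrt{-u(\gamma(s))}\to -\frac{1}{\sqrt{2(n-k)}}$ as $s\to 0$. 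Since $\gamma(0)\in\cS$ forces $u(\gamma(0))=0$ (the only critical value, by Theorem \ref{l:gL}), integrating from $0$ yields $\sqrt{-u(\gamma(s))}\approx \frac{s}{\sqrt{2(n-k)}}$, which is \eqref{e:flow1}; substituting back into \eqref{e:uniqun} gives $|\nabla u|^2(\gamma(s))\approx\frac{2}{n-k}\cdot\frac{s^2}{2(n-k)}=\frac{s^2}{(n-k)^2}$, which is \eqref{e:flow2}. One must be slightly careful that the $\approx$ in \eqref{e:uniqun} is an asymptotic equivalence as $u\to 0$, so the integration should be done with $(1+o(1))$ factors and the conclusion stated up to such factors; this is routine.

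The second step is \eqref{e:flow3}, the assertion that the unit tangent $\gamma_s$ becomes asymptotically orthogonal to the axis $\Ker\Hess_u$. Here I would use the structural facts ($\cS 1$)–($\cS 2$): near a limit point $x_0\in\cS$, the set $\cS$ is a $C^1$ $k$-submanifold with tangent space $\Ker\Hess_u(x_0)$, and $\Hess_u(x_0)=-\frac{1}{n-k}\Pi$ with $\Pi$ the projection onto the normal directions. Write $\gamma_s=\nabla u/|\nabla u|$. Expanding $\nabla u$ near $x_0$ and using that the Hessian kills the axis directions to leading order, $\nabla u(x)\approx \Hess_u(x_0)(x-x_0)=-\frac{1}{n-k}\Pi(x-x_0)$, so $\nabla u$ lies asymptotically in the normal space and hence $\Pi_{\text{axis}}(\gamma_s)\to 0$. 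The cleanest way to make this rigorous is to observe that $\Pi_{\text{axis}}\nabla u$ vanishes on $\cS$ (since $\nabla u$ vanishes there) and has vanishing differential in axis directions at points of $\cS$ (because the relevant block of $\Hess_u$ is zero), so a Taylor/quantitative argument controlling $\Pi_{\text{axis}}\nabla u$ by a power of $\dist(\cdot,\cS)$ higher than the power controlling $|\nabla u|\approx \dist(\cdot,\cS)$ forces the ratio to zero; one then uses that $\gamma(s)\to x_0\in\cS$. This is exactly where the subtlety flagged in the introduction lives — the naive Hessian expansion is only $C^1$, not $C^2$, so the "higher power" estimate is not a one-line Taylor bound and genuinely uses the finer analysis of \cite{CM5}, \cite{CM6}. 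I expect this to be the main obstacle, and in a survey-style proof it is legitimate to cite those estimates rather than reprove them.

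The final step assembles the containment. Since $|\gamma_s|=1$, for any $s$ the chord satisfies $|\gamma(s)-\gamma(0)|\le s$ (the length of the curve from $0$ to $s$). By \eqref{e:flow1}, for $s$ small $-u(\gamma(s))\ge \frac{s^2}{2(n-k)}(1-o(1))\ge \frac{s^2}{4n^2}$, say, once $s$ is small enough that the $(1-o(1))$ factor exceeds $\frac{2(n-k)}{4n^2}\le \frac12$ (using $n-k\le n$); hence $s\le 2n\sqrt{-u(\gamma(s))}$ and therefore $|\gamma(s)-\gamma(0)|\le s\le 2n\sqrt{-u(\gamma(s))}$, i.e. $\gamma(s)\subset B_{2n\sqrt{-u(\gamma(s))}}(\gamma(0))$. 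The only things to watch are the direction of the inequality in \eqref{e:flow1} (we need the lower bound on $-u$, so we need the upper-side asymptotic on $s$ in terms of $\sqrt{-u}$, which \eqref{e:flow1} provides) and choosing the smallness threshold for $s$ uniformly; both are elementary given the already-established \eqref{e:flow1}.
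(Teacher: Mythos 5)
Your plan is the natural one and, I believe, consistent with what the paper sketches: the finite-length and convergence claims, together with \eqref{e:flow1}--\eqref{e:flow2}, are driven by integrating the scalar ODE that Theorem~\ref{l:gL} gives along the flow line, while \eqref{e:flow3} uses the Hessian structure ($\cS 2$) at points of $\cS$, and the ball containment is arc-length bookkeeping. Two points of feedback.

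First, there is a sign slip worth fixing: with $s$ increasing away from $\cS$ the unit tangent is $\gamma_s=-\nabla u/|\nabla u|$ (as in \eqref{e:D3a}), so $\frac{d}{ds}\bigl(-u(\gamma(s))\bigr)=+|\nabla u|\approx\sqrt{\tfrac{2}{n-k}}\sqrt{-u}$ and $\frac{d}{ds}\sqrt{-u}\to +\frac{1}{\sqrt{2(n-k)}}$; you wrote the opposite sign and then silently discarded it when integrating. The conclusion \eqref{e:flow1}--\eqref{e:flow2} is unchanged, but as written the middle lines give $\sqrt{-u}\approx -s/\sqrt{2(n-k)}<0$.

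Second, you overstate the difficulty of \eqref{e:flow3}. No ``power of $\dist(\cdot,\cS)$ higher than one'' is needed, and one does not have to lean on further fine structure beyond ($\cS 2$): since $u\in C^2$, the Taylor expansion $\nabla u(\gamma(s))=\Hess_u(x_0)(\gamma(s)-x_0)+R(s)$ holds with $|R(s)|=o(|\gamma(s)-x_0|)=o(s)$ (using only continuity of $\Hess_u$ and $|\gamma(s)-x_0|\le s$). Because $\Hess_u(x_0)=-\tfrac{1}{n-k}\Pi$ annihilates the axis, $\Pi_{\text{axis}}\nabla u(\gamma(s))=\Pi_{\text{axis}}R(s)$, so $|\Pi_{\text{axis}}\nabla u|=o(s)$, while \eqref{e:flow2} gives $|\nabla u|\approx s/(n-k)$. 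Dividing yields $|\Pi_{\text{axis}}(\gamma_s)|=o(1)$ directly; the $o(\cdot)$ from $C^2$ regularity is exactly enough, and ($\cS 2$) is the only input from \cite{CM5}, \cite{CM6} that the step actually uses. With these adjustments your argument is complete and matches the route the paper indicates.
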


\section{Theorem \ref{t:thom} and an estimate for rescaled MCF}

   The Arnold-Thom conjecture for the arrival time is phrased as an analytic question about solutions to a degenerate elliptic partial differential equation.  Yet, we will see that the key is  understanding the geometry of an associated mean curvature flow.  
   
   \subsection{Rescaled mean curvature flow}

A one-parameter family of hypersurfaces $M_{\tau}$ evolves by {\emph{mean curvature flow}} (or {\emph{MCF}}) if each point $x(\tau)$ evolves by 
$\partial_{\tau} x =   -  H  \, \nn$.  Here $H$ is the mean curvature and $\nn$ a unit normal.  The arrival time gives a mean curvature flow 
\begin{align}
	\Sigma_{\tau} = \{ x \, | \, u (x) = \tau \} \, .
\end{align}
As $\tau$ goes to the extinction time (the supremum of $u$), the level sets contract and eventually disappear.  To capture the structure near the extinction, 
we consider the 
rescaled level sets
\begin{align}
	 	\Sigma_t = \frac{1}{\sqrt{-u}} \, \{ x   \, | \, u(x) = - \e^{-t} \} \, .
\end{align}
This is equivalent to simultaneously running MCF and rescaling space and reparameterizing time.
The one-parameter family   $\Sigma_t$
satisfies the {\emph{rescaled  MCF}}   
\begin{align}
	\partial_t x =   - \left( H - \frac{1}{2} \, \langle x , \nn \rangle \right) \, \nn   \, .
\end{align}
The rescaled MCF   is the negative gradient flow for the Gaussian area  
\begin{align}
	F (\Sigma) \equiv \int_{\Sigma} \e^{ - \frac{|x|^2}{4} }   \, .
\end{align} 
In particular, $F(\Sigma_t)$ is non-increasing.

It will be convenient to set $\phi = H - \frac{1}{2} \, \langle x , \nn \rangle$.
The fixed points for rescaled MCF are shrinkers where $\phi =0$;  the most important shrinkers are cylinders
$\cC = \SS^{n-k}_{\sqrt{2(n-k)}} \times \RR^k$ where $k = 0 , \dots , n-1$.    

\subsection{Rate of convergence of the rescaled MCF}

The $F$ functional is nonincreasing along the rescaled MCF $\Sigma_t$ and it is constant only when $\Sigma_t$ is also constant.  Furthermore, the distance between $\Sigma_j$ and $\Sigma_{j+1}$ is bounded by 
\begin{align}	\label{e:deltaj}
	\delta_j \equiv \sqrt{F (\Sigma_{j-1}) - F(\Sigma_{j+2})} \, .
\end{align}
We refer to \cite{CM9} (cf. \cite{CM2}) for the precise statement, but the idea is simple.  To see this, consider the analogous question for a finite dimensional gradient flow $x(t)$.  In this case, the fundamental theorem of calculus and the Cauchy-Schwarz inequality give
\begin{align}
	|x(j+1) - x(j)| \leq \int_j^{j+1} |x'(t)| \, dt \leq \left(  \int_j^{j+1} |x'(t)|^2 \, dt \right)^{ \frac{1}{2} } = \left( f(j+1) - f(j) \right)^{ \frac{1}{2} } \, .
\end{align}

  Existence of $\lim_{t \to \infty} \Sigma_t$ is proven in \cite{CM2} by showing that $\sum \delta_j < \infty$.  In \cite{CM9}, we prove that
  $\delta_j$ is summable even after being raised to some power  less than one:

\begin{Pro}	\label{c:djsums1}
\cite{CM9} There exists $\bar{\beta} < 1$ so that  $\sum_{j=1}^{\infty} \, \delta_j^{ \bar{\beta}} < \infty $.
\end{Pro}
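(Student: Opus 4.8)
The plan is to derive Proposition \ref{c:djsums1} from a \emph{\L ojasiewicz-type inequality} for the Gaussian area $F$ along the rescaled MCF, exactly paralleling the finite-dimensional template recalled in the excerpt but with an exponent strictly better than the trivial one. Write $f(j) = F(\Sigma_j)$, so $f$ is nonincreasing and bounded below, and $\delta_j^2 = f(j-1) - f(j+2)$. Suppose we have the discrete differential inequality
\begin{align}	\label{e:Lojdisc}
	f(j) - F(\cC) \leq C \, \left( f(j-1) - f(j+1) \right)^{\mu}
\end{align}
for some $\mu > \tfrac12$, where $\cC$ is the limiting shrinker and $F(\cC)$ its Gaussian area (here I use that $\lim_t \Sigma_t = \cC$ exists by \cite{CM2}, so $f(j) \downarrow F(\cC)$). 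Setting $g_j = f(j) - F(\cC)$, inequality \eqr{e:Lojdisc} reads $g_j \leq C (g_{j-1} - g_{j+1})^{\mu}$, i.e. $g_{j-1} - g_{j+1} \geq c \, g_j^{1/\mu}$ with $1/\mu < 2$. A standard iteration argument for such monotone sequences (of the type used to prove \L ojasiewicz convergence rates, cf. \cite{CM2},\cite{Si}) then yields polynomial decay $g_j \leq C' j^{-\theta}$ with $\theta = \frac{1/\mu}{1/\mu - 1} > 1$; more precisely, $\theta$ can be taken as large as $\frac{\mu}{1-\mu}$ when $\mu$ is close to $1$. Since $\delta_j^2 = g_{j-1} - g_{j+2} \leq g_{j-1}$, we get $\delta_j \leq C'' j^{-\theta/2}$, and hence $\sum_j \delta_j^{\bar\beta} < \infty$ for any $\bar\beta > 2/\theta = 2(1-\mu)/\mu$. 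Choosing $\mu$ close enough to $1$ forces $2/\theta < 1$, so some $\bar\beta < 1$ works.

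The work, then, is in establishing \eqr{e:Lojdisc} with $\mu > \tfrac12$. First I would localize: by Theorem \ref{l:flowlines} and ($\cS 1$)--($\cS 2$), near a point of $\cS$ the rescaled level sets converge in a scale-invariant way to a cylinder $\cC = \SS^{n-k}_{\sqrt{2(n-k)}} \times \RR^k$, so it suffices to prove the \L ojasiewicz inequality for $F$ near $\cC$ in the space of graphs over $\cC$. Writing $\Sigma_t$ as a graph of a function $v(\cdot,t)$ over $\cC$ (valid on the cylindrical portion, with the spherical directions handled by the easy quadratic estimate), the second variation of $F$ at $\cC$ is the drift (Ornstein--Uhlenbeck type) operator $L = \Delta_{\cC} - \tfrac12 \langle x, \nabla \cdot\rangle + |A|^2 + \tfrac12$. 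The difficulty flagged in the introduction is precisely that $L$ has a nontrivial kernel — the translations along the $\RR^k$ axis and the dilation/rotation modes of the sphere factor are genuine infinitesimal deformations — and, crucially, that this kernel is \emph{non-integrable}: the quadratic approximation $F(\cC + \text{kernel element}) \approx F(\cC)$ is too crude, and one must go to higher order. This is the step I expect to be the main obstacle.

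To get past non-integrability I would use the Lyapunov--Schmidt / frequency-function machinery that the excerpt alludes to: decompose $v$ into its projection $v^K$ onto $\Ker L$ and the orthogonal part $v^\perp$, absorb $v^\perp$ via the uniform spectral gap of $L$ on $(\Ker L)^\perp$ (this is the "easy" direction, giving exponential-type control), and then analyze the finite-dimensional reduced functional $\Phi(v^K) = F(\text{graph of } v^K + w(v^K))$ on $\Ker L$. Because $F$ restricted to this finite-dimensional manifold of near-shrinkers is real analytic (the relevant integrals are analytic in finitely many parameters), the \emph{classical} \L ojasiewicz gradient inequality \eqr{e:lou} applies to $\Phi$ with some exponent $p_0 < \infty$, hence some $\mu_0 = 1/p_0 > 0$; combining this with the spectral-gap control of $v^\perp$ and translating back to the discrete-in-time quantities $f(j)$ yields \eqr{e:Lojdisc} with a definite $\mu \in (\tfrac12, 1)$ after possibly passing to the three-step difference $f(j-1) - f(j+1)$ to dominate the relevant gradient norm $\int \phi^2 e^{-|x|^2/4}$ (this is exactly why $\delta_j$ is defined with the window $[j-1, j+2]$). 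Matching the value of $\mu$ obtained from the frequency estimates against the bookkeeping in the first paragraph then delivers $\bar\beta < 1$, and I would quote the precise \L ojasiewicz inequality and the frequency bounds from \cite{CM9} rather than reproving them here.
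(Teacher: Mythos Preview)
The paper is a survey and does not actually prove Proposition~\ref{c:djsums1}; it simply cites \cite{CM9}. So there is no in-paper proof to compare against. That said, your first paragraph---deduce a discrete Lojasiewicz inequality $g_j \leq C\,(g_{j-1}-g_{j+1})^{\mu}$, iterate to obtain $g_j \leq C\, j^{-\mu/(1-\mu)}$, and then pick $\bar\beta \in (2(1-\mu)/\mu,\,1)$---is exactly the mechanism behind the result in \cite{CM2}, \cite{CM9}. One small bookkeeping point: you need $\mu > 2/3$ (not just $\mu > 1/2$) to force $2/\theta < 1$; this is fine because the Lojasiewicz exponent for cylinders in \cite{CM2} can be taken arbitrarily close to $1$.

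Where your sketch diverges from what is actually done is the second paragraph, the proposed proof of the Lojasiewicz inequality via Lyapunov--Schmidt reduction plus classical finite-dimensional Lojasiewicz on the reduced functional. That is Simon's template for \emph{compact} critical points, and it hits a genuine obstruction here: $\cC$ is noncompact while each $\Sigma_t$ is compact, so $\Sigma_t$ is a graph over $\cC$ only on a ball $B_{R_j}$ (this is precisely the content of Proposition~\ref{p:evolving}(1)), and there is no global graphical parametrization on which to run the implicit function theorem. Moreover, the quadratic kernel elements $x_i^2-2$ and $x_ix_j$ in \eqr{e:kernelcC} are the \emph{non-integrable} directions---they do not arise from one-parameter families of shrinkers---so there is no well-defined ``graph of $v^K + w(v^K)$'' on which to evaluate $F$ and obtain an analytic reduced functional $\Phi$. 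In \cite{CM2} the Lojasiewicz inequality near cylinders is instead proven \emph{directly}, by Taylor expanding $F$ and combining this with scale-comparison (``cylindrical scale'') estimates that use the Gaussian weight to control the non-graphical region; no Lyapunov--Schmidt step appears. The frequency function that you invoke is used in \cite{CM9} for a different purpose---the approximation of $w$ by a kernel element in Theorem~\ref{t:goalCM7}---not for the Lojasiewicz inequality itself. Your closing remark (quote the inequality from \cite{CM9}) is the right practical move; the Lyapunov--Schmidt paragraph should be replaced by a pointer to the direct argument of \cite{CM2}.
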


\subsection{A strong cylindrical approximation}

Since  $\Sigma_t$ converges to a limit $\cC$, $\Sigma_j$ is  close to   $\cC$ for $j$ large.  However,  we will construct cylinders $\cC_j$, varying with $j$, that are even closer.
    We need some notation:
 $\Pi_j$ is projection orthogonal to axis of $\cC_j$,  $\cL$ is the drift Laplacian on  $\cC_j$, and
 \begin{align}
  	\| g \|_{L^p (\Sigma_j)}^p \equiv  \int_{\Sigma_j} |g|^p \, \e^{ - \frac{|x|^2}{4} }  \, .
\end{align}
The precise statement of the approximation  is technical (see \cite{CM9}), but  it roughly says:

\begin{Pro}	 \label{p:evolving}
\cite{CM9} Given   $ \beta < 1$, there exist $C $,   radii $R_j$, and cylinders $\cC_j$ with:
\begin{enumerate}
\item For $t \in [j,j+1]$, $\Sigma_t$ is a graph over $B_{R_j} \cap \cC_{j+1}$ of  a function $w$ with 
\begin{align}
	 \| w \|_{ W^{3,2}}^2 + \| \phi \|_{W^{3,2}(B_{R_j})}+ \e^{ - \frac{R_j^2}{4} } &\leq C  \, \delta_j^{\beta} \, . \notag
\end{align}
\item  
 $w$ is almost an eigenfunction; i.e.,  $\left| \phi - \left( \cL + 1 \right) w \right|$ is quadratic in $w$.
 
 \item $\left| \Pi_j - \Pi_{j+1} \right| \leq C  \,\delta_j^{\beta}$. 
\item The higher derivatives of $w$ and $\phi$ are bounded.
\end{enumerate}

\end{Pro}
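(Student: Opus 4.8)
The plan is to build the approximation on the smooth convergence $\Sigma_t\to\cC$ and to quantify it through the gradient-flow structure. First I would fix the backbone: by \cite{CM2}, $\Sigma_t$ converges smoothly to a fixed cylinder $\cC$, and together with the regularity theory for (rescaled) mean curvature flow and interior estimates this gives that on any fixed ball $\Sigma_t$ is eventually a graph over $\cC$ whose $C^m$ norms are uniformly bounded for every $m$ and whose $C^0$ norm tends to $0$. This is essentially item (4), and it supplies the a priori high-order control used in every interpolation below. What remains is to promote ``small on fixed balls'' to ``of size $\delta_j^\beta$ on the growing balls $B_{R_j}$,'' and to extract the structural refinements (1)--(3).

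To detect the scale $\delta_j$, I would use that rescaled MCF is the negative gradient flow of $F$, so that $-\frac{d}{dt}F(\Sigma_t)=\|\phi\|_{L^2(\Sigma_t)}^2$ and hence $\int_{j-1}^{j+2}\|\phi\|_{L^2(\Sigma_t)}^2\,dt=F(\Sigma_{j-1})-F(\Sigma_{j+2})=\delta_j^2$; thus $\|\phi\|_{L^2}$ has average size $\delta_j$ on $[j-1,j+2]$. Since $\phi$ solves a linear parabolic equation along the flow, an $(\text{almost-})$monotonicity of $\|\phi\|_{L^2}$ --- or a parabolic mean-value inequality --- upgrades this to $\sup_{t\in[j,j+1]}\|\phi\|_{L^2(\Sigma_t)}\lesssim\delta_j$, and Gaussian-weighted parabolic energy estimates, with the a priori $C^m$ bounds as backbone, then control $\|\phi\|_{W^{3,2}(B_{R_j})}$ on $[j,j+1]$. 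The loss from $\delta_j$ to $\delta_j^\beta$ with $\beta<1$ is exactly the price of interpolating this weak, Gaussian-weighted bound against the a priori pointwise high-order bounds (and of absorbing the derivatives of the cut-off localizing to $B_{R_j}$); using enough a priori derivatives one can reach any prescribed $\beta<1$. Taking $R_j\to\infty$ of size $2\sqrt{\beta\log(1/\delta_j)}$ then makes the tail $e^{-R_j^2/4}$ comparable to $\delta_j^\beta$.

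For (2), I would write $\Sigma_t$ graphically over the cylinder and Taylor expand $\phi=H-\frac12\langle x,\nn\rangle$ in $(w,\nabla w,\nabla^2 w)$: the linear part is the linearized rescaled-MCF operator $\cL+1$ of the cylinder (using $(\cS 2)$, for which $|A|^2+\frac12=1$), and the remainder is smooth and vanishes to second order, hence is quadratic in $w$ where $\|w\|_{C^2}$ is small --- a bounded computation. For (1) and (3), I would take $\cC_{j+1}$ to be a cylinder best fitting $\Sigma_t$, $t\in[j,j+1]$, in the Gaussian $L^2$-norm over $B_{R_j}$, the free parameters being the position and direction of the $\RR^k$-axis; the Euler--Lagrange equations of this minimization force the graph function $w$ of $\Sigma_t$ over $\cC_{j+1}$ to be $L^2$-orthogonal to the rigid-motion Jacobi fields of the cylinder. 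On the orthogonal complement of $\Ker(\cL+1)$ there is a spectral gap, so by the expansion in (2) the part of $w$ orthogonal to $\Ker(\cL+1)$ is $\lesssim\|\phi\|_{W^{3,2}}+\|w\|^2\lesssim\delta_j^\beta$, the quadratic term being absorbed since $w$ is a priori small; the part of $w$ lying in $\Ker(\cL+1)$, of which the best-fit condition removes only the rigid-motion directions, is controlled only through the a priori decay, and sharpening it is the subtle point below. For (3), $\cC_j$ and $\cC_{j+1}$ best-fit overlapping unit intervals, the best-fit parameters depend on the surface in a locally Lipschitz way, and the flow has moved by $\lesssim\delta_j^\beta$ in between, so $|\Pi_j-\Pi_{j+1}|\lesssim\delta_j^\beta$.

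The step I expect to be the main obstacle comes from the non-compact $\RR^k$-direction of the cylinder, and it has two faces. The first is the uniformity out to the growing radius $R_j\sim\sqrt{\log(1/\delta_j)}$: on a fixed compact set everything above is soft, but propagating the graph representation and the $W^{3,2}$ (and $C^m$) bounds out to a radius where $e^{-R_j^2/4}$ is only just summable against the Gaussian weight is where the sharp normalizations of Theorem \ref{l:gL} and $(\cS 1)$--$(\cS 2)$ are genuinely needed. The second, and deeper, is that the best-fit cylinder removes only the integrable, rigid-motion part of $\Ker(\cL+1)$; the component of $w$ along the non-integrable part --- the modes coming from the drift Laplacian on the $\RR^k$-factor, which are not tangent to any family of cylinders --- is invisible to $\phi$, and pinning it down to the precision claimed in (1) is exactly the frequency-function analysis of $\cL$ that is the real content of \cite{CM9} and the theme of this survey.
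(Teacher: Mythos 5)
Your reconstruction of the mechanics is largely on the mark: the gradient-flow identity $\int_{j-1}^{j+2}\|\phi\|_{L^2(\Sigma_t)}^2\,dt=\delta_j^2$ as the source of scale, the parabolic bootstrap to upgrade to $W^{3,2}$ bounds, the Taylor expansion of $\phi$ to get the linearization $\cL+1$ for (2), the best-fit cylinder (killing the rigid-motion directions of the kernel) for defining $\cC_{j+1}$, and the choice $R_j\sim 2\sqrt{\beta\log(1/\delta_j)}$ to balance the Gaussian tail. These are indeed the ingredients, and you correctly single out the non-integrable part of $\Ker(\cL+1)$ as the delicate component of $w$.

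However, your closing paragraph misplaces the frequency-function analysis, and this is a material structural error. You claim that "pinning [the non-integrable kernel component] down to the precision claimed in (1) is exactly the frequency-function analysis of $\cL$." It is not. Read the left-hand side of (1) again: the quantity controlled is $\|w\|_{W^{3,2}}^2$, not $\|w\|_{W^{3,2}}$, so (1) only asserts $\|w\|\lesssim\delta_j^{\beta/2}$. Your spectral-gap argument already gives the stronger $\|w^\perp\|\lesssim\delta_j^\beta$ for the part orthogonal to the kernel; the kernel part only needs the weaker $\delta_j^{\beta/2}$, which is precisely the rough bound that the \cite{CM2} discrete \L{}ojasiewicz machinery (with exponent worsened by the non-integrable kernel) supplies — no frequency function required. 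The survey makes this explicit: after stating Proposition \ref{p:evolving} it says that the bound for $w^2$ in (1) is summable by Proposition \ref{c:djsums1} but the bound for $w$ (i.e.\ $\delta_j^{1/2}$) is not, and then introduces the frequency function in order to prove Theorem \ref{t:goalCM7}, which improves the kernel-component control from $\delta^{1/2}$ to $\delta^{\nu}$ with $\nu$ close to $1$. So the frequency analysis belongs to Theorem \ref{t:goalCM7}, not to Proposition \ref{p:evolving}; attributing it to the latter both overstates what (1) asserts (you appear to be reaching for $\|w\|\lesssim\delta_j^\beta$ rather than $\|w\|^2\lesssim\delta_j^\beta$) and obscures why Theorem \ref{t:goalCM7} is the genuine new step in \cite{CM9}.
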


\subsection{Reduction}
 The next theorem reduces  Theorem \ref{t:thom}  to an estimate for  rescaled MCF.

 \begin{Thm}	\label{t:rMCFA}
\cite{CM9} Theorem \ref{t:thom}  holds if 
\begin{align}	\label{e:rmcfa}
	\sum_{j=1}^{\infty} \,   \int_j^{j+1} \left( \sup_{B_{2n} \cap \Sigma_t} \left| \Pi_{j+1} (\nabla H) \right| \right) \, dt  < \infty \, .
\end{align}
 \end{Thm}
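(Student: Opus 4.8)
The plan is as follows. The finite-length part of Theorem \ref{t:thom} is already contained in Theorem \ref{l:flowlines}, so, granting \eqr{e:rmcfa}, it remains to show that the unit tangent of every gradient flow line of $\nabla u$ has a limit. I fix such a line, parametrized by arc length as $\gamma(s)$, $s\ge 0$, with $|\gamma_s|=1$ and $\gamma(0)\in\cS$, translate so that $\gamma(0)$ is the origin, and let $\Pi$ be orthogonal projection onto the orthogonal complement of $\Ker\Hess_u(\gamma(0))$, with $\Pi_{\text{axis}}=I-\Pi$. Since $\gamma$ is, up to orientation, the arc-length reparametrization of the flow line, it is equivalent to show that $\gamma_s$ converges as $s\to 0^+$. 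First I would invoke \eqr{e:flow3}, which gives $\Pi_{\text{axis}}(\gamma_s)\to 0$ and hence $|\Pi(\gamma_s)|\to 1$, to reduce this to convergence of $\hat v:=\Pi(\gamma_s)/|\Pi(\gamma_s)|$; since $\hat v$ lies in the unit sphere of a fixed subspace and $\Pi$ is a constant projection, $\bigl|\tfrac{d}{ds}\hat v\bigr|\le 2\,|\Pi(\gamma_s)|^{-1}\,|\Pi(\gamma_{ss})|$, so it suffices to prove
\begin{align}	\label{e:suffices}
	\int_0^{s_0}\left|\Pi(\gamma_{ss})\right|\,ds<\infty
\end{align}
for some $s_0>0$. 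Note that the unrestricted integral $\int|\gamma_{ss}|\,ds$ of \eqr{e:0p4} need not be finite; only the component orthogonal to the axis need be integrable, which is exactly why only $\Pi(\nabla H)$ appears in \eqr{e:rmcfa}.

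The next step is a geometric identity for $\gamma_{ss}$. Fix $s>0$; then $\gamma(s)\notin\cS$, so the level set $\Sigma=\{u=u(\gamma(s))\}$ is smooth near $\gamma(s)$. Write $\nn=\nabla u/|\nabla u|$ for its unit normal and $H$ for its mean curvature, positive by mean convexity, so that by \eqr{e:arrivalu} one has $|\nabla u|\,H=1$ on $\Sigma$. Since $\gamma_s=\pm\nn$, we get $\gamma_{ss}=\nabla_{\nn}\nn$, which is tangent to $\Sigma$ and equals $|\nabla u|^{-1}$ times the tangential part of $\Hess_u(\nn)$; and for $e$ tangent to $\Sigma$, the symmetry of $\Hess_u$ together with $\langle\nabla u,\nn\rangle=|\nabla u|$ and $\langle\nabla u,\nabla_e\nn\rangle=0$ gives $\langle\Hess_u(\nn),e\rangle=\nabla_e|\nabla u|=-H^{-2}\langle\nabla_\Sigma H,e\rangle$. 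Hence
\begin{align}	\label{e:gammass}
	\gamma_{ss}=-\,\frac{\nabla_\Sigma H}{H}\,,\qquad \text{so}\qquad \left|\Pi(\gamma_{ss})\right|=\frac{\left|\Pi(\nabla_\Sigma H)\right|}{H}=|\nabla u|\,\left|\Pi(\nabla_\Sigma H)\right|\,.
\end{align}
This is the crucial step: it rewrites the quantity that governs the limiting direction as an intrinsic quantity — the gradient of the mean curvature — on the mean curvature flow given by the level sets of $u$.

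Then I would pass to the rescaled flow and change variables. With $\lambda=(-u(\gamma(s)))^{-1/2}=\e^{t/2}$, the rescaled level set is $\Sigma_t=\lambda\,\Sigma$, and $\xi(t):=\lambda\,\gamma(s)$ lies in $B_{2n}\cap\Sigma_t$ by the last assertion of Theorem \ref{l:flowlines}. Since rescaling preserves directions and $\nabla_\Sigma H=\lambda^{2}\,\nabla_{\Sigma_t}H$, combining \eqr{e:gammass} with \eqr{e:flow1} and \eqr{e:flow2} — which give $\lambda^{2}=(-u)^{-1}\approx 2(n-k)\,s^{-2}$ and $|\nabla u|\approx s/(n-k)$ — yields $\left|\Pi(\gamma_{ss})\right|\approx \tfrac{2}{s}\,\bigl|\Pi(\nabla_{\Sigma_t}H)(\xi(t))\bigr|$, while the same asymptotics give $-\tfrac{ds}{dt}=-u/|\nabla u|\approx s/2$ for the substitution between $s\in(0,s_0]$ and $t\in[t_0,\infty)$; the two powers of $s$ cancel, so
\begin{align}	\label{e:reduced}
	\int_0^{s_0}\left|\Pi(\gamma_{ss})\right|\,ds\ \lesssim\ \int_{t_0}^{\infty}\bigl|\Pi(\nabla_{\Sigma_t}H)(\xi(t))\bigr|\,dt\,.
\end{align}
On each interval $t\in[j,j+1]$ I would replace $\Pi$ by $\Pi_{j+1}$, using $\Pi=\lim_j\Pi_j$: the error is at most $|\Pi-\Pi_{j+1}|\,\sup_{B_{2n}\cap\Sigma_t}|\nabla H|\le C\,\delta_j^{\beta}$ by Proposition \ref{p:evolving}(3) and the derivative bounds of Proposition \ref{p:evolving}(4), and $\sum_j\delta_j^{\beta}<\infty$ by Proposition \ref{c:djsums1}. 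Bounding $\bigl|\Pi_{j+1}(\nabla_{\Sigma_t}H)(\xi(t))\bigr|\le \sup_{B_{2n}\cap\Sigma_t}\bigl|\Pi_{j+1}(\nabla H)\bigr|$ and invoking \eqr{e:rmcfa} then makes the right side of \eqr{e:reduced}, hence \eqr{e:suffices}, finite; by the first step $\gamma_s$ converges, which is the remaining assertion of Theorem \ref{t:thom}.

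The hard part, I expect, is the identity \eqr{e:gammass} together with the bookkeeping of rescalings around it: this is what turns the finite-dimensional-looking acceleration $\gamma_{ss}$, which controls the limiting direction, into the intrinsic datum $\nabla H$ on the rescaled mean curvature flow, and it is what makes the powers of $s$ coming from \eqr{e:flow1}--\eqr{e:flow2} cancel exactly, so that \eqr{e:rmcfa} appears with no leftover weight. By comparison, the reduction to $\Pi(\gamma_{ss})$ via $\Pi_{\text{axis}}(\gamma_s)\to 0$ and the replacement of $\Pi$ by $\Pi_{j+1}$ should be routine given Theorem \ref{l:flowlines} and Propositions \ref{c:djsums1} and \ref{p:evolving}.
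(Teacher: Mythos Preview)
Your overall architecture is the paper's: kill the axial component of $\gamma_s$ via \eqr{e:flow3}, rewrite $\gamma_{ss}$ as $-\nabla_\Sigma\log H$ (the paper writes this as $\nabla^T\log|\nabla u|$, equivalent since $|\nabla u|=1/H$), pass to the rescaled flow using \eqr{e:flow1}--\eqr{e:flow2} so the powers of $s$ cancel, and read off the summability from \eqr{e:rmcfa}. The geometric identity and the rescaling bookkeeping are handled correctly.

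There is, however, a genuine gap at the step where you replace the fixed projection $\Pi$ by $\Pi_{j+1}$. You write that the error is at most $|\Pi-\Pi_{j+1}|\,\sup_{B_{2n}\cap\Sigma_t}|\nabla H|\le C\,\delta_j^{\beta}$, citing Proposition~\ref{p:evolving}(3). But (3) bounds the \emph{consecutive} difference $|\Pi_j-\Pi_{j+1}|\le C\delta_j^\beta$, not $|\Pi-\Pi_{j+1}|$; the latter is only controlled by the tail $\sum_{k>j}\delta_k^\beta$, which tends to zero but is not itself summable in $j$. On the other side, Proposition~\ref{p:evolving}(4) gives only that $|\nabla H|$ is bounded, and (1) gives $\|w\|_{W^{3,2}}\le C\delta_j^{\beta/2}$, which after interpolation yields a pointwise bound on $|\nabla H|$ of order $\delta_j^{\beta/2}$ --- precisely the quantity the paper flags as \emph{not} summable. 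So the product $|\Pi-\Pi_{j+1}|\cdot\sup|\nabla H|$ is not shown to be summable by what you cite, and your integral \eqr{e:suffices} is not yet controlled.

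The paper circumvents this by working discretely rather than continuously. It first proves Lemma~\ref{l:seq} (that $\int_s^{\Lambda s}|\gamma_{ss}|\to 0$, an easy consequence of $s|\gamma_{ss}|\to 0$), which reduces the problem to convergence of the \emph{sequence} $\gamma_{s_j}$ at times $t(s_j)=j$. Then it telescopes $\Pi_j(\gamma_{s_j})$: the term $|\Pi_{j+1}(\gamma_{s_j})-\Pi_{j+1}(\gamma_{s_{j+1}})|$ is bounded by $\int_{s_{j+1}}^{s_j}|\Pi_{j+1}(\gamma_{ss})|\,ds$ and handled by \eqr{e:rmcfa} exactly as you do, while $|\Pi_j(\gamma_{s_j})-\Pi_{j+1}(\gamma_{s_j})|\le|\Pi_j-\Pi_{j+1}|$ is applied to a \emph{unit} vector, so Proposition~\ref{p:evolving}(3) gives $C\delta_j^\beta$ directly and Proposition~\ref{c:djsums1} makes this summable. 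The point is that the telescoping uses only consecutive projection differences against bounded vectors, never the tail $|\Pi-\Pi_{j+1}|$ against $|\nabla H|$. Your continuous route is appealing because it would dispense with Lemma~\ref{l:seq}, but to make it go through you would need either a summable pointwise bound on $|\nabla H|$ (which is exactly what fails) or to reorganize the $\Pi\to\Pi_{j+1}$ comparison as a telescoping sum in $k$ as the paper does.
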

 
 \vskip1mm
 To explain Theorem \ref{t:rMCFA}, let $\gamma (s)$ be a unit speed parameterization of a gradient flow line   with   $\gamma (0)   \in \cS$.   We will show that $\gamma_s$ has a limit as $s \to 0$.
 The derivative of $\gamma_s = - \frac{\nabla u}{|\nabla u|}$ is  
\begin{align}	\label{e:D3a}
 	\gamma_{ss} &=  - \frac{1}{|\nabla u|} \, \left(  \Hess_u ( \gamma_s) -  \gamma_s \, \langle  \Hess_u ( \gamma_s) , \gamma_s  \rangle	\right)= - \frac{\left( \Hess_u (\gamma_s) \right)^T }{|\nabla u|}=
	\nabla^T \log |\nabla u|   \, ,
 \end{align}
 where $\left( \cdot \right)^T$ is the tangential projection onto the level set of $u$.  
 
 \vskip2mm
 The simplest way to prove that $\lim \gamma_s$ exists would be to show that $\int |\gamma_{ss}| < \infty$, which is related to the rate of convergence for an associated
  rescaled MCF.  While this rate   fails to give integrability of $|\gamma_{ss}|$, it does give the following:
  
  \begin{Lem}	\label{l:seq}
  Given any $\Lambda > 1$, we have
$
  	\lim_{s \to 0} \,  \int_{s}^{\Lambda \, s} |\gamma_{ss} | \, ds  = 0$.
	  \end{Lem}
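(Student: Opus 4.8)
The plan is to change variables from arclength $s$ along $\gamma$ to the time $t$ of the associated rescaled mean curvature flow $\Sigma_t$. The point is that, by the asymptotics of Theorem \ref{l:flowlines}, a multiplicative window $[s,\Lambda s]$ corresponds to a $t$-interval of \emph{bounded} length (roughly $2\log\Lambda$) whose endpoints tend to $+\infty$ as $s\to0$; and after this substitution the singular factor hidden in $\gamma_{ss}$ is exactly absorbed, leaving a time integral of a quantity that goes to zero uniformly on $B_{2n}$.

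Translate so $\gamma(0)=0$. By \eqref{e:arrivalu} the mean curvature of the level set $\{u=\tau\}$ equals $-1/|\nabla u|$, so \eqref{e:D3a} gives $\gamma_{ss}=\nabla^T\log|\nabla u|=-(\nabla H_{\text{ls}})^T/H_{\text{ls}}$, where $H_{\text{ls}}$ is the level-set mean curvature and $(\cdot)^T$ is the projection to the level set. Writing $\tau=-\e^{-t}$, the level set $\{u=\tau\}$ rescales to $\Sigma_t$ by the factor $\e^{t/2}$; letting $H$, $\nabla H$ denote the mean curvature of $\Sigma_t$ and its tangential gradient, evaluated at the rescaled point $y(t)\in\Sigma_t$, one has $H_{\text{ls}}=\e^{t/2}H(y(t))$ and $|(\nabla H_{\text{ls}})^T|=\e^{t}|\nabla H|(y(t))$, hence $|\gamma_{ss}|=\e^{t/2}\,|\nabla H|(y(t))/|H|(y(t))$. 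Along $\gamma$, differentiating $u(\gamma(s))=-\e^{-t(s)}$ gives $\frac{dt}{ds}=-|\nabla u|/(-u)$, which by \eqref{e:flow1}--\eqref{e:flow2} is $\approx-2/s$; therefore $s\approx c\,\e^{-t/2}$ and $|ds|\approx\tfrac{s}{2}|dt|\approx\tfrac{c}{2}\e^{-t/2}|dt|$. The factors $\e^{t/2}$ and $\e^{-t/2}$ cancel, and since $|H|(y(t))\to\sqrt{(n-k)/2}>0$ (which follows from Theorem \ref{l:gL} via $|\nabla u|\,H_{\text{ls}}=-1$, or from $\Sigma_t\to\cC$),
\begin{align}
	\int_s^{\Lambda s}|\gamma_{ss}|\,ds\ \le\ C\int_{I_s}|\nabla H|(y(t))\,dt\, ,\notag
\end{align}
where $I_s$ is the $t$-interval corresponding to $[s,\Lambda s]$. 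By \eqref{e:flow1}--\eqref{e:flow2}, with constants uniform along $\gamma$, the length of $I_s$ tends to $2\log\Lambda$ and $\min I_s\to\infty$ as $s\to0$, and by the last line of Theorem \ref{l:flowlines} we have $y(t)\in B_{2n}\cap\Sigma_t$ once $s$ is small.

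It therefore suffices to show that $\sup_{B_{2n}\cap\Sigma_t}|\nabla H|\to0$ as $t\to\infty$, since then $\int_{I_s}|\nabla H|(y(t))\,dt\le|I_s|\cdot\sup_{t\in I_s}\sup_{B_{2n}\cap\Sigma_t}|\nabla H|\to0$. But by Proposition \ref{p:evolving}(1) and (4), for $t\in[j,j+1]$ and $j$ large the flow $\Sigma_t$ is a graph over $B_{2n}\cap\cC_{j+1}$ of a function $w$ with $\|w\|_{W^{3,2}}\to0$ and with bounded higher derivatives, so $w\to0$ in $C^3(B_{2n})$ by interpolation; since the mean curvature of any cylinder is constant, $\nabla H$ on $\Sigma_t$ is controlled by $\|w\|_{C^3(B_{2n})}$, and hence $\sup_{B_{2n}\cap\Sigma_t}|\nabla H|\to0$. (Equivalently, this is the smooth convergence $\Sigma_t\to\cC$ on compact sets from \cite{CM2}.) This proves Lemma \ref{l:seq}.

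The main obstacle, and the reason this argument does not also yield \eqref{e:0p4}, is purely quantitative: reparametrizing by $t$ absorbs the weight $\e^{t/2}$ in $\gamma_{ss}$, but \eqref{e:0p4} would require summability of $\sum_j\int_j^{j+1}\sup_{B_{2n}\cap\Sigma_t}|\nabla H|\,dt$, and the convergence of this sum — particularly in the direction $\Pi_j$ of the axis — is exactly the delicate content of \eqref{e:rmcfa} and Theorem \ref{t:rMCFA}, requiring the frequency-function analysis of the non-integrable kernel. For Lemma \ref{l:seq} only the finitely many time intervals meeting $I_s$ matter, so it is enough that each term goes to zero, which is the elementary convergence $\Sigma_t\to\cC$. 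The one point that needs care is the uniformity of the asymptotics \eqref{e:flow1}--\eqref{e:flow2} along $\gamma$, which is what makes $|I_s|$ bounded and controls the error in the substitution $ds\leftrightarrow dt$.
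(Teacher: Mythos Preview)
Your argument is correct, but the paper's proof is considerably shorter and more elementary. The paper observes directly that $s\,|\gamma_{ss}| \to 0$: by \eqref{e:D3a}, $\gamma_{ss} = -(\Hess_u(\gamma_s))^T/|\nabla u|$; since $u$ is $C^2$ one has $\Hess_u \to -\tfrac{1}{n-k}\Pi$ along $\gamma$, and from Theorem~\ref{l:flowlines} one has $|\nabla u|\approx s/(n-k)$ and $\Pi_{\text{axis}}(\gamma_s)\to 0$. Writing $\gamma_s = \Pi(\gamma_s)+\Pi_{\text{axis}}(\gamma_s)$, a two–line computation gives $|(\Pi\gamma_s)^T|\le |\Pi_{\text{axis}}(\gamma_s)|\to 0$, hence $(\Hess_u(\gamma_s))^T\to 0$ and $s\,|\gamma_{ss}|\to 0$. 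Then $\int_s^{\Lambda s}|\gamma_{ss}|\,ds'\le (\log\Lambda)\cdot\sup_{[s,\Lambda s]} s'|\gamma_{s's'}|\to 0$.

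Your route through the rescaled flow is not wrong, and in fact it is the same statement in different coordinates: by \eqref{e:barH} one has $s\,|\gamma_{ss}|\approx \sqrt{2(n-k)}\,|\nabla H|/|H|$ at the corresponding point of $\Sigma_t$, so ``$s\,|\gamma_{ss}|\to 0$'' and ``$|\nabla H|\to 0$ on $B_{2n}\cap\Sigma_t$'' are equivalent. The difference is in what you invoke to get there. The paper uses only the $C^2$ regularity of $u$ at the critical point (i.e.\ continuity of $\Hess_u$), whereas you appeal to Proposition~\ref{p:evolving} (or smooth convergence $\Sigma_t\to\cC$ from \cite{CM2}) and an interpolation argument to get $w\to 0$ in $C^3$. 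That is legitimate, but it is heavier machinery than the lemma requires, and it somewhat obscures that this step is genuinely elementary while the summability in \eqref{e:rmcfa} is not. On the plus side, your write-up dovetails nicely with the reduction argument that follows (equations \eqref{e:barH}--\eqref{e:tofs2}), so it can be read as a warm-up for Theorem~\ref{t:rMCFA}.
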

  
  \begin{proof}
 Using Theorem \ref{l:flowlines} and the fact that $\Hess_u \to - \frac{1}{n-k} \, \Pi$, 
  \eqr{e:D3a} implies that $  s\, |\gamma_{ss} |   \to 0$.  The lemma follows immediately from this.
  \end{proof}
   
  To get around the lack of integrability, we will decompose $\gamma_s$ into two pieces - the parts tangent and orthogonal to the axis - and deal with these separately.  The tangent part goes to zero by
  \eqr{e:flow3} in Theorem \ref{l:flowlines}.  We will use  \eqr{e:rmcfa} to  control  the orthogonal part.

Translate so that $\gamma (0) = 0$ and
let $\bar{H}= \frac{1}{|\nabla u|}$ be the mean curvature of the level set of $u$.
The mean curvature $H$  of   $\Sigma_t$ at time $t = - \log (-u)$ is given by
 \begin{align}		\label{e:barH}
 	\bar{\nabla}  \log \bar{H}  = \frac{  {\nabla} \log  {H}  }{ \sqrt{ - u  }}   \approx \frac{ \sqrt{2(n-k)}}{s} \, \ {\nabla} \log {H}   \, .
 \end{align}
    Note that  $u(\gamma (s))$ is decreasing and Theorem \ref{l:flowlines} gives 
\begin{align}	  	
	  t(s) & \approx - 2 \, \log s + \log (2(n-k))  \, , \\
 	t'(s) &=- \partial_s \, \left( \log (-u(\gamma(s)) \right) = \frac{ -\partial_s u(\gamma(s))}{ u(\gamma(s)) } \approx - \frac{2}{s} \, . \label{e:tofs2}
 \end{align}
 Given a positive integer $j$, define $s_j$ so that $t(s_j) = j$.  Note that  $\left| \log \frac{s_{j+1}}{s_j} \right|$ is uniformly bounded.  
 Therefore, by Lemma \ref{l:seq}, it suffices to show that $\gamma_{s_j}$ has a limit.  
 
 We can write $\gamma_{s_j} =\Pi_{\text{axis},j} (\gamma_{s_j}) + \Pi_j (\gamma_{s_j})$.  We have $\Pi_{\text{axis},j} (\gamma_{s_j})  \to 0$ since
 $\Pi_{\text{axis},j} \to \Pi_{\text{axis}}$ and $\Pi_{\text{axis}} (\gamma_s) \to 0$.  Thus,  we need that
 $\lim_{j \to \infty} \Pi_j (\gamma_{s_j}) $ exists; this will follow   from
 \begin{align}	\label{e:toprovej}
 	\sum_j \, \left| \Pi_j (\gamma_{s_j}) -  \Pi_{j+1} (\gamma_{s_{j+1}}) \right| < \infty \, .
 \end{align}

 Theorem \ref{l:flowlines} gives (for $s$ small) that $\gamma (s) \subset B_{ 2n \, \sqrt{ -u(\gamma(s))}}  $ and, thus,
  \eqr{e:D3a} gives
\begin{align}	 
	\left| \Pi_{j+1} (\gamma_{s_j}) -  \Pi_{j+1} (\gamma_{s_{j+1}}) \right|  &\leq \int_{s_{j+1}}^{s_j} \left| \Pi_{j+1} (\gamma_{ss} ) \right| \, ds = \int_{s_{j+1}}^{s_j}  \left| \Pi_{j+1}\left( \bar{\nabla}  \log \bar{H} (\gamma(s)) \right) \right| \, ds  \notag \\
	& \leq C \, \int_{s_{j+1}}^{s_j}  \sup_{B_{ 2n \, \sqrt{-u(\gamma(s))}}}\, \left| \Pi_{j+1} (\nabla \log  \bar{H}) \right| ( \cdot ,-u) \, ds  \, .	\label{e:aaaa}
\end{align} 
Using \eqr{e:barH} and \eqr{e:tofs2} in \eqr{e:aaaa} and then applying Theorem \ref{t:rMCFA} gives
\begin{align}	 
	\sum_j \, \left| \Pi_{j+1} (\gamma_{s_j}) -  \Pi_{j+1} (\gamma_{s_{j+1}}) \right| &   
	 \leq C \, \sum_j \,  \int_j^{j+1} \sup_{B_{ 2n} \cap \Sigma_t } \, \left| \Pi_{j+1} (\nabla   {H}) \right|  \, dt < \infty \, .
\end{align} 
  On the other hand, $\sum_j \left| \Pi_{j} (\gamma_{s_j}) -  \Pi_{j+1} (\gamma_{s_{j}}) \right|   < \infty$ by (3) in Proposition \ref{p:evolving} and Proposition \ref{c:djsums1}.
 	 The triangle inequality gives \eqr{e:toprovej}, so we conclude that $\gamma_s$ has a limit.

\subsection{The summability condition \eqr{e:rmcfa}}

We have seen that the key    is to prove \eqr{e:rmcfa}.   This summability is plausible  since $\Sigma_t$ is converging to a cylinder where $H$ is constant and, thus, $\nabla H$ is going to zero.  The rate of convergence then becomes critical.  If the convergence was fast enough, then $|\nabla H|$ would be summable even without the projection $\Pi_{j+1}$.

The mean curvature $H$ of the graph of $w$ is given at each point explicitly as a function of $w$, $\nabla w$ and $\Hess_w$; see
          corollary $A.30$ in \cite{CM2}.  We can write this as the first order part (in $w, \nabla w , \Hess_w$) plus a quadratic remainder
          \begin{align}	\label{e:expandH}
          	H   &= H_{\cC} + \left( \Delta_{\theta} + \Delta_x +   \frac{1}{2} \right) w + O(w^2)     \, .
          \end{align}
           Here  $O(w^2)$ is a term that depends at least quadratically on $w, \nabla w , \Hess_w$ and
            the constant $H_{\cC} = \frac{ \sqrt{n-k} }{\sqrt{2}}$ is the mean curvature of $\cC$.

  The bound for $w^2$ in (1) from Proposition \ref{p:evolving} is  summable by Proposition \ref{c:djsums1}, but the bound for $w$   is not.     In particular, (1) gives a bound for $\nabla H$ that is not summable.

\section{Approximate eigenfunctions on cylinders}

Proposition \ref{p:evolving} shows the graph function $w$ is an approximate eigenfunction on the cylinder $\cC$.  Namely, (2) gives that 
\begin{align}
	\left| (\cL +1 ) w - \phi \right| = O(w^2) \, , 
\end{align}
where $O(w^2)$ is a term that is quadratically bounded in $w$ and its derivatives.  Note that $\phi$ itself is bounded by (1) and, moreover, $\phi$ is of the same order as $w^2$.

Even though the bound for $w$ is not summable, we will see that there is a function $\tilde{w}$ in the kernel of $\cL + 1$ so that $|w- \tilde{w}|$ is summable.  Moreover,  the contribution of  $\tilde{w}$   to $\nabla H$  goes away once we project orthogonally to the axis.  Putting this together gives \eqr{e:rmcfa} and, thus, completes the proof of Arnold-Thom.  The arguments needed for this decomposition of $w$ are technically complicated because of higher order ``error'' terms; see \cite{CM9}.  However, the idea   is clear.  We will explain this in a model case next.

\subsection{Eigenfunctions on cylinders}

The eigenfunctions on the cylinder are built out of spherical eigenfunctions on the cross-section and eigenfunctions for the Euclidean drift Laplacian on the axis.  
Namely, by lemma $3.26$ in \cite{CM2}, the kernel of  $\cL +1$ on the weighted Gaussian space on $\cC$ consists of quadratic polynomials and ``infinitesimal rotations''  
\begin{align}	\label{e:kernelcC}
	\tilde{w} = \sum_i a_i (x_i^2 - 2) + \sum_{i<j} a_{ij} x_i x_j + \sum_k   x_k h_k (\theta) \, , 
\end{align}
where $a_i , a_{ij}  $ are constants and each $h_k $ is a $\Delta_{\theta}$-eigenfunction with eigenvalue $\frac{1}{2}$.

\vskip2mm
To illustrate the ideas involved, it is helpful to recall the Euclidean case:

\begin{Lem}	\label{l:hermite}
If $\cL v = - \lambda v$ on $\RR^n$ and $\int_{\RR^n} v^2 \, \e^{ - \frac{|x|^2}{4} } < \infty$, then $2\, \lambda$ is a nonnegative integer and $v$ is a polynomial of degree $2\lambda$.
\end{Lem}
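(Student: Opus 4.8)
The plan is to diagonalize the eigenvalue equation against the Hermite basis of the Gaussian $L^2$ space and let the equation annihilate every mode except those of degree $2\lambda$. First I would set $\HH = L^2\big(\RR^n,\e^{-\frac{|x|^2}{4}}\,dx\big)$, with inner product $\langle f,g\rangle=\int fg\,\e^{-\frac{|x|^2}{4}}$, and recall the classical facts that the (suitably normalized, multi-index) Hermite polynomials $\{H_\alpha\}$ form a complete orthonormal basis of $\HH$, that $\cL H_\alpha=-\tfrac{|\alpha|}{2}\,H_\alpha$, and that $H_\alpha$ has degree exactly $|\alpha|$ with leading monomial a nonzero multiple of $x^\alpha$. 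Since $v\in\HH$ (and we may assume $v\not\equiv 0$), this gives $v=\sum_\alpha c_\alpha H_\alpha$ with $c_\alpha=\langle v,H_\alpha\rangle$, $\sum_\alpha c_\alpha^2<\infty$, and some $c_\alpha\neq 0$. The key will then be the symmetry identity $\langle\cL v,H_\alpha\rangle=\langle v,\cL H_\alpha\rangle$ for every $\alpha$: granting it, comparing $\cL v=-\lambda v$ with $\cL H_\alpha=-\tfrac{|\alpha|}{2}H_\alpha$ forces $\lambda\,c_\alpha=\tfrac{|\alpha|}{2}\,c_\alpha$, so $c_\alpha=0$ unless $|\alpha|=2\lambda$; since some $c_\alpha\neq 0$ this makes $2\lambda$ a nonnegative integer, and $v=\sum_{|\alpha|=2\lambda}c_\alpha H_\alpha$ is a polynomial of degree exactly $2\lambda$ because the $H_\alpha$ with $|\alpha|=2\lambda$ have linearly independent leading monomials $x^\alpha$.

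Two preliminary steps feed the symmetry identity. First, since $\cL$ is a smooth elliptic operator, interior regularity makes $v$ smooth. Second, I would run a Caccioppoli estimate: testing $\cL v=-\lambda v$ against $v\,\eta^2\,\e^{-\frac{|x|^2}{4}}$ for a cutoff $\eta\equiv 1$ on $B_R$, supported in $B_{2R}$, with $|\nabla\eta|\le 2$, and using $\cL f\cdot\e^{-\frac{|x|^2}{4}}=\dv\!\big(\e^{-\frac{|x|^2}{4}}\nabla f\big)$, one gets $\int_{B_R}|\nabla v|^2\e^{-\frac{|x|^2}{4}}\le C(\lambda)\int v^2\e^{-\frac{|x|^2}{4}}$ uniformly in $R$, hence $\nabla v\in\HH$.

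With these in hand, Green's identity on $B_R$ gives
\begin{align}
\int_{B_R}\big((\cL v)\,H_\alpha-v\,(\cL H_\alpha)\big)\,\e^{-\frac{|x|^2}{4}}=\int_{\partial B_R}\big(H_\alpha\,\partial_\nu v-v\,\partial_\nu H_\alpha\big)\,\e^{-\frac{|x|^2}{4}}\,.\notag
\end{align}
Since polynomials lie in $\HH$, Cauchy--Schwarz in $\HH$ (pairing $|\nabla v|$ with $|H_\alpha|$, and $|v|$ with $|\nabla H_\alpha|$) shows the function $\big(|H_\alpha|\,|\nabla v|+|v|\,|\nabla H_\alpha|\big)\e^{-\frac{|x|^2}{4}}$ lies in $L^1(\RR^n)$, so by the coarea formula its integral over $\partial B_r$ tends to $0$ along some sequence $R_i\to\infty$; thus the boundary term vanishes in the limit and $\langle\cL v,H_\alpha\rangle=\langle v,\cL H_\alpha\rangle$, finishing the proof. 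The one genuinely delicate point — which I expect to be the main obstacle — is exactly this last step: upgrading the a priori weak eigenvalue relation to an honest identity of Gaussian inner products by controlling the terms at infinity. Everything else is bookkeeping with a standard orthonormal basis.
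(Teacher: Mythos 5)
Your proof is correct, but it takes a genuinely different route from the paper's. The paper's sketch is an iterative differentiation argument built on two facts: the commutation identity that makes each partial derivative $v_i = \partial_i v$ a new eigenfunction with eigenvalue lowered by $\tfrac{1}{2}$ (i.e.\ $\cL v_i = (\tfrac{1}{2}-\lambda) v_i$), and the Caccioppoli-type inequality $\int |\nabla v|^2 \e^{-f} \leq 2\lambda \int v^2 \e^{-f}$, which forces $\lambda \geq 0$ and forces $v$ to be constant when $\lambda = 0$. Iterating, after $k$ differentiations the eigenvalue is $\lambda - k/2$, which cannot become negative without killing the function; hence $2\lambda$ is a nonnegative integer and $v$ is a polynomial of degree $2\lambda$. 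You instead prove the result by expanding $v$ in the Hermite orthonormal basis of $\HH = L^2(\e^{-|x|^2/4})$ and showing that the eigenvalue equation annihilates every Fourier coefficient except those in degree $2\lambda$; the only delicate point is justifying $\langle \cL v, H_\alpha\rangle = \langle v, \cL H_\alpha\rangle$, which you handle by the same Caccioppoli bound (to get $\nabla v \in \HH$) followed by a cutoff/coarea argument for the boundary term. Both approaches use essentially the same a priori gradient estimate, but they diverge after that: the paper's iteration is more elementary and self-contained (it never needs completeness of the Hermite system, only the positivity of the Dirichlet form), while your spectral-decomposition route is conceptually cleaner and more standard, trading self-containedness for an appeal to the classical completeness theorem. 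Both are valid; the paper's version is better tailored to the local, approximate-eigenfunction setting it is generalized to later (where one only has an eigenfunction on a ball, so a global basis expansion is unavailable), which is likely why the authors chose it.
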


When $n=1$, these  are the Hermite polynomials (up to a scaling normalization).  

\begin{proof}[Sketch of the proof of Lemma \ref{l:hermite}] There are two ingredients:   
 \begin{itemize}
 \item Each partial derivative $v_i = \frac{\partial v}{\partial x_i}$ satisfies $\cL \, v_i = \left( \frac{1}{2} - \lambda \right) \, v_i$.
 \item $\int_{\RR^n} |\nabla v|^2 \, \e^{ - \frac{|x|^2}{4} } \leq 2\lambda \, \int_{\RR^n} v^2 \, \e^{ - \frac{|x|^2}{4} } < \infty$.
 \end{itemize}
The second property implies that $\lambda \geq 0$ and $v$ is constant if $\lambda = 0$.  The lemma follows by applying this to $2\lambda$  derivatives of $v$.
 \end{proof}
  
 \vskip1mm
  The next theorem   approximates $w$ in $|x| \leq 3n$ by   $\tilde{w}$ as in \eqr{e:kernelcC} (we state the theorem in the model case where $w$ is an eigenfunction; see \cite{CM9} for approximate eigenfunctions). 
  
\begin{Thm}	\label{t:goalCM7}
\cite{CM9} Given $\nu < 1$, there exists $C$ so that if $(\cL +1)w =0$ on $B_R$ with $\e^{ - \frac{R^2}{4}} + \| w \|_{W^{3,2}}^2 \leq \delta$ and
$w^2 \leq \delta \, \e^f$, then there is a function $\tilde{w}$ as in \eqr{e:kernelcC} with
\begin{align}	\label{e:tgoal}
	\sup_{|x| \leq 3n} \, \left| w - \tilde{w} \right| \leq C \,\delta^{ \nu }  \, .
\end{align}
\end{Thm}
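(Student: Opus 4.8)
�The plan is to prove Theorem~\ref{t:goalCM7} by a compactness/contradiction argument, reducing the quantitative estimate \eqref{e:tgoal} to the qualitative classification of $L^2$ kernel elements of $\cL+1$ supplied by \eqref{e:kernelcC} together with the Euclidean Hermite lemma (Lemma~\ref{l:hermite}). First I would set up the contradiction: suppose the theorem fails for some $\nu < 1$. Then there are $\delta_m \to 0$, radii $R_m \to \infty$, and functions $w_m$ with $(\cL+1)w_m = 0$ on $B_{R_m}$, $\e^{-R_m^2/4} + \|w_m\|_{W^{3,2}}^2 \le \delta_m$, $w_m^2 \le \delta_m \e^f$, but with $\sup_{|x|\le 3n}|w_m - \tilde w| > C_m \delta_m^{\nu}$ for \emph{every} $\tilde w$ in the family \eqref{e:kernelcC}, where $C_m \to \infty$. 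The natural normalization is to divide by the best approximation error itself. Concretely, let $\tilde w_m$ be a near-minimizer of $\sup_{|x|\le 3n}|w_m - \tilde w|$ over the finite-dimensional kernel space, set $\epsilon_m = \sup_{|x|\le 3n}|w_m - \tilde w_m|$, and define $v_m = (w_m - \tilde w_m)/\epsilon_m$; by construction $\epsilon_m > C_m \delta_m^\nu$ and $\sup_{|x|\le 3n}|v_m| = 1$ (up to the near-minimizing slack), while $v_m$ is $L^2$-orthogonal, or nearly so, to the kernel \eqref{e:kernelcC}.

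Next I would derive uniform local estimates for $v_m$. Since $\tilde w$ is a kernel element, $(\cL+1)v_m = 0$ on $B_{R_m}$. Standard interior elliptic estimates for the drift Laplacian (the weight is smooth and bounded on compact sets, so this is just ordinary Schauder/$L^2$ theory on fixed balls like $B_{4n}$) bound $\|v_m\|_{C^{2,\alpha}(B_{3n})}$ — or at least $\|v_m\|_{W^{k,2}(B_{3n})}$ for large $k$ — in terms of $\sup_{B_{4n}}|v_m|$. To control the latter I would need to propagate the bound outward: the hypotheses $\|w_m\|_{W^{3,2}}^2 \le \delta_m$ and the kernel representation \eqref{e:kernelcC} (whose coefficients are controlled by the $W^{3,2}$ norm of $w_m$, hence also small) give that $\|v_m\|_{W^{3,2}}$ is bounded by $C/\epsilon_m$, which is \emph{not} bounded — this is the subtlety. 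The point of normalizing by $\epsilon_m$ rather than by $\|w_m\|_{W^{3,2}}$ is that $\sup_{B_{3n}}|v_m|=1$ directly, and local elliptic estimates upgrade this $L^\infty$ bound on $B_{3n}$ to a $C^{2,\alpha}$ bound on a slightly smaller ball, while a mean-value/Harnack-type argument for $\cL+1$ lets me control $v_m$ on $B_{4n}$ in terms of its values on $B_{3n}$ plus the (small) global $L^2$ tail.

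Then I would pass to the limit. By Arzel\`a--Ascoli, a subsequence of $v_m$ converges in $C^2_{loc}$ to a limit $v_\infty$ solving $(\cL+1)v_\infty = 0$ on all of $\RR^{n+1}$ (here is where $R_m \to \infty$ is used). I must check $v_\infty \in L^2(\e^{-|x|^2/4})$: this follows from a uniform Gaussian decay estimate for solutions of $(\cL+1)v=0$ with the normalization $\sup_{B_{3n}}|v|\le 1$, obtained by a barrier argument using the drift term — on the cylinder $\cC$, $\cL+1 = \Delta_\theta + \cL_{\RR^k} + 1$ and one can separate variables and bound each mode by a Hermite-type decay estimate, exactly in the spirit of Lemma~\ref{l:hermite}. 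Since $v_\infty$ is a nonzero (because $\sup_{B_{3n}}|v_\infty|=1$) $L^2$ element of $\Ker(\cL+1)$, the classification \eqref{e:kernelcC} forces $v_\infty$ to be of the form $\tilde w$. But $v_m$ was $L^2$-orthogonal (up to error) to that space and $C^2_{loc}$-convergence preserves orthogonality on compacta against the fixed finite-dimensional space, so $v_\infty \perp \Ker(\cL+1)$, hence $v_\infty = 0$ — contradicting $\sup_{B_{3n}}|v_\infty| = 1$.

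Finally I would note where the exponent $\nu < 1$ (rather than $\nu = 1$) enters: it is needed precisely to absorb the gap between the ``soft'' contradiction scheme and the actual rate. The argument above, run bluntly, yields \eqref{e:tgoal} with \emph{some} modulus $o(1)$ as $\delta \to 0$ rather than the explicit power $\delta^\nu$; to get the power one quantifies the compactness step via a three-annulus / frequency-function inequality for $\cL+1$ (the rate of growth of the frequency function for the drift Laplacian mentioned in the introduction), which converts the spectral gap between the eigenvalue $0$ of $\cL+1$ on the kernel and the next eigenvalue into a geometric decay rate, and any $\nu$ strictly less than the one dictated by that gap is admissible. The main obstacle, and the step I expect to be genuinely delicate, is the outward propagation / Gaussian-decay estimate for $v_m$: one must show that the $L^\infty$ normalization on the fixed ball $B_{3n}$, combined with the smallness of the global weighted $W^{3,2}$ norm of $w_m$, is enough to keep $v_m$ under control on all of $B_{R_m}$ with Gaussian tails, so that the limit $v_\infty$ is a genuine $L^2$ eigenfunction and the classification \eqref{e:kernelcC} can be invoked; without this, the limit could fail to be $L^2$ and the argument collapses.
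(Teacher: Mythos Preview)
Your compactness scheme has a genuine gap at exactly the step you flag as ``delicate'': showing that the limit $v_\infty$ lies in weighted $L^2$. There is no barrier, mean-value, or Harnack-type argument that produces Gaussian decay for a solution of $(\cL+1)v=0$ from a bound on a fixed inner ball alone---indeed, the content of the frequency dichotomy (Theorem~\ref{t:main}) is precisely that such solutions are either polynomial or grow essentially like $\e^{|x|^2/4}$. Since you only normalize $\sup_{B_{3n}}|v_m|=1$, and for $\nu>\tfrac12$ the global weighted norm $\|v_m\|_{W^{3,2}}\le \delta_m^{1/2}/\epsilon_m \le \delta_m^{1/2-\nu}/C_m$ is unbounded, nothing rules out that $v_\infty$ is one of the exponentially growing solutions; then \eqref{e:kernelcC} does not apply and the contradiction evaporates. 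You also acknowledge that the soft argument gives only $o(1)$, not $\delta^{\nu}$, and defer to a frequency inequality for the rate---but that inequality is already the whole proof.

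The paper's argument is direct and uses no compactness. One differentiates $w$ twice (reducing $\lambda=1$ to $\lambda=0$, as in the proof of Lemma~\ref{l:hermite}) to obtain a drift-harmonic function on $B_R$; after subtracting its average on a fixed inner ball, the Poincar\'e inequality gives a definite lower bound on the frequency there. Theorem~\ref{t:main} then forces $U(r)>\tfrac{r^2}{2}-O(1)$ out to $r=R$, so $I(R)$ is at least $\e^{(1-\epsilon)R^2/2}$ times its inner value. Comparing this forced growth with the a~priori pointwise bound $w^2\le \delta\,\e^{f}$ at $r=R$, and using $\e^{-R^2/4}\le\delta$, yields smallness of order $\delta^{\nu}$ on the inner ball for any $\nu<1$; integrating back twice recovers $\tilde w$ in the form \eqref{e:kernelcC}. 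Thus the frequency estimate is not an upgrade appended to a compactness proof---it replaces it entirely.
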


This gives the improved estimate that we need.  
 Namely,   (1) in Proposition \ref{p:evolving} gives $|w  | \leq C \, \delta_j^{ \frac{1}{2} }$,   while \eqr{e:tgoal} gives $|w - \tilde{w}| \leq C \, \delta_j^{ \nu }$ with $\nu \approx 1$.  The first bound is not summable, but the second bound is by Proposition \ref{c:djsums1}.

 \vskip2mm
 The $L^2$ methods  for Lemma \ref{l:hermite} yield sharp global results, but  are not sharp enough for \eqr{e:tgoal}.   We will need a different approach  -  the frequency   - that is explained  next.

\subsection{The frequency}

The key to understanding the growth of eigenfunctions for $\cL$ is a frequency function inspired by Almgren's frequency for harmonic functions, \cite{Al}, cf. \cite{GL}, \cite{HaS}, \cite{Ln}, \cite{CM10}, \cite{D}.  The frequency was  used by Bernstein, \cite{Be}, to study the ends of shrinkers and in \cite{CM7} to study the growth of approximate eigenfunctions.

 To explain the frequency,  set $f = \frac{|x|^2}{4}$ on $\RR^n$ and
 define quantities $I(r)$ and $D(r)$
\begin{align}
	I (r) &= r^{1-n} \int_{\partial B_r} u^2 \, , 	\label{e:defI}\\
D(r)& =  r^{2-n} \int_{\partial B_r} u u_r  =r^{2-n} \, \e^{  f(r)} \, \int_{B_r}  \left(|\nabla u|^2-V\,u^2\right) \, \e^{-f}  \, ,\label{e:defD}
\end{align}
and the {\it{frequency}} $U(r)= \frac{D}{I}$.  Thus, 
  $(\log I)' = \frac{2U}{r}$, so $U$ measures the polynomial rate of growth of $\sqrt{I}$.   For example,
if $u(x) = |x|^d$, then $U= d$.

  \vskip1mm
    There is a dichotomy where eigenfunctions of $\cL$ are polynomial or grow  exponentially:

\begin{Thm}   \label{t:main}
\cite{CM7}  Given $\epsilon>0$ and $\delta>0$, there exist $r_1>0$ such that if  $\cL u = - \lambda u$  and $U(\bar{r}_1)\geq \delta+2\,\sup \, \{ 0 ,   \lambda \} $ for some $\bar{r}_1 \geq r_1$, then for all $r\geq R(\bar{r}_1)$
\begin{align}     
U(r) &> \frac{r^2}{2}-n-2\,\lambda -\epsilon\, .   
\end{align}
\end{Thm}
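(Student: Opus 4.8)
The plan is an Almgren-type frequency argument for the drift operator $\cL = \e^{f}\,\dv(\e^{-f}\,\nabla\,\cdot\,)$ with $f=\tfrac{|x|^2}{4}$. First I would set up the monotonicity formulas: alongside $(\log I)'=\tfrac{2U}{r}$, a standard first-variation computation starting from \eqr{e:defD} (using $\Delta u = \tfrac r2 u_r - \lambda u$ on $\partial B_r$ and splitting $|\nabla u|^2$ into radial and tangential parts) gives
\[
	(\log D)'(r)=\frac{2-n}{r}+\frac r2+\frac{\int_{\partial B_r}\left(|\nabla u|^2-\lambda u^2\right)}{\int_{\partial B_r}u\,u_r}\,.
\]
Since $|\nabla u|^2\ge u_r^2$ on $\partial B_r$, Cauchy--Schwarz, $\int_{\partial B_r}u\,u_r\le(\int_{\partial B_r}u^2)^{1/2}(\int_{\partial B_r}u_r^2)^{1/2}$, together with $\int_{\partial B_r}u\,u_r=r^{n-2}D$ and $\int_{\partial B_r}u^2=r^{n-1}I$, yields on the set $\{U>0\}$
\[
	\frac{U'}{U}=(\log D)'-(\log I)'\ \ge\ \frac r2-\frac{n-2}{r}-\frac Ur-\frac{\lambda r}{U}\,,
\]
that is, $U'\ge F(r,U)$ with $F(r,U)=-\tfrac1r U^2+\left(\tfrac r2-\tfrac{n-2}{r}\right)U-\lambda r=\tfrac1r(U-\rho_-(r))(\rho_+(r)-U)$, where for $r$ large the roots satisfy $\rho_-(r)=2\lambda+O(r^{-2})$ and $\rho_+(r)=\tfrac{r^2}{2}-n+2-2\lambda+O(r^{-2})$. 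The key feature of this step is that the whole chain of inequalities is an \emph{equality} for $u=\e^{|x|^2/4}$, which has $U\equiv\tfrac{r^2}{2}$; this is what makes the lower-order constants sharp, and it is what the computation must be careful not to spoil.

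Next I would trap $U$ above the lower root. By unique continuation $I>0$, so $U$ is well defined; choosing $r_1$ large enough that $\rho_-(\bar r_1)<2\sup\{0,\lambda\}+\tfrac\delta2$, the hypothesis gives $U(\bar r_1)>\rho_-(\bar r_1)+\tfrac\delta2$. Since $\rho_-'=O(r^{-3})$ while $F\!\left(r,\rho_-(r)+\tfrac\delta4\right)\approx\tfrac{\delta}{4r}\,\rho_+(r)\gtrsim\delta\,r$, the curve $r\mapsto\rho_-(r)+\tfrac\delta4$ is a strict lower barrier, so $U(r)\ge\rho_-(r)+\tfrac\delta4$ — in particular $U>0$ — for all $r\ge\bar r_1$, and the inequality of Step 1 holds on all of $[\bar r_1,\infty)$. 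The same bound $U'\ge F(r,U)\ge\tfrac{\delta}{4r}(\rho_+-U)$ on $\{U<\rho_+\}$, with $\rho_+\approx\tfrac{r^2}{2}$, then forces $U$ to grow: there are $c_0=c_0(\delta)>0$ and $r_2=r_2(\bar r_1)$ with $U(r)\ge c_0 r^2$ for all $r\ge r_2$.

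Finally I would bootstrap, in two passes. Put $h=\rho_+-U$. From $U\ge c_0 r^2$ we get $U-\rho_-\ge\tfrac{c_0}{2}r^2$, hence $U'\ge F(r,U)\ge\tfrac{c_0 r}{2}h$; since $\rho_+'\le 2r$ for $r$ large, $h'=\rho_+'-U'\le 2r-\tfrac{c_0 r}{2}h$, and a comparison argument gives $\limsup_r h\le 4/c_0$, so $h$ is bounded. Re-running with $h$ now known bounded improves the coefficient, since then $U-\rho_-=(\rho_+-\rho_-)-h=\tfrac{r^2}{2}(1+o(1))$, so $U'\ge\tfrac r2(1+o(1))\,h$ and $h'\le r+o(r)-\tfrac r2(1+o(1))\,h$; hence $h'<0$ whenever $h>2+\epsilon$ and $r$ is large, so $\limsup_r h\le 2+\epsilon$. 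Since $\rho_+(r)=\tfrac{r^2}{2}-n+2-2\lambda+o(1)$, this gives $U(r)=\rho_+(r)-h(r)>\tfrac{r^2}{2}-n-2\lambda-\epsilon$ for all $r\ge R(\bar r_1)$, which is the claim (after relabelling $\epsilon$ and choosing $r_1$, $R(\bar r_1)$ to absorb the $O$ and $o$ terms).

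The step I expect to be the real obstacle is not any isolated computation but making Step 1 \emph{sharp}: one must derive $U'\ge F(r,U)$ with exactly the right lower-order constants, which means carefully tracking the weight $\e^{-f}$, the radial/tangential split of $|\nabla u|^2$, and the signs keeping $D,U>0$ on the range that matters, while being lossless precisely at the extremal $\e^{|x|^2/4}$. After that, the trapping in Steps 2--3 is a routine ODE barrier argument, but the bootstrap genuinely needs two passes: a single comparison only delivers $U\ge\tfrac{r^2}{2}-C(\delta)$, and it is the second pass — using boundedness of $h$ to sharpen the coefficient of $h$ in the differential inequality — that pins $U$ to within $\epsilon$ of $\tfrac{r^2}{2}-n-2\lambda$.
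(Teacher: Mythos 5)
Your argument is correct and follows essentially the same route as \cite{CM7}: differentiate $D$ directly, use $|\nabla u|^2 \ge u_r^2$ together with Cauchy--Schwarz on $\int_{\partial B_r} u\,u_r$ to obtain the Riccati-type inequality $U' \ge F(r,U) = \tfrac{1}{r}(U-\rho_-)(\rho_+-U)$, trap $U$ above $\rho_-$ with an ODE barrier, and then bootstrap twice to pin $U$ within $\epsilon$ of $\rho_+(r) = \tfrac{r^2}{2}-n+2-2\lambda+O(r^{-2})$. Your observation that the whole chain is an equality at the radial extremal $\e^{|x|^2/4}$ (where $\lambda=-n/2$ and $U\equiv r^2/2$) is exactly the point that makes the lower-order constants sharp, and the two-pass refinement is the right move since a single comparison only caps $h=\rho_+-U$ by a $\delta$-dependent constant rather than by $2+\epsilon$.
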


This theorem explains why we expect a good  approximation when the eigenfunction is defined (and bounded) on a large ball.  This is easiest to explain in the case $\lambda =0$ (we can reduce to this after taking $2\lambda$ partial derivatives).  Namely, subtracting a constant to make the average zero on a ball, we can use the Poincar\'e inequality to get a positive lower bound for the frequency on a fixed inner ball.  Theorem \ref{t:main} then implies extremely rapid growth out to the boundary of the ball.  Since
the function is bounded, we conclude that it must be   small on the inner ball as claimed.  See theorem $4.1$ in \cite{CM7} for details (compare      \cite{CM9}).

\end{document}